\newtheorem{Thm}{Theorem}
\newtheorem{Lem}{Lemma}
\newtheorem{Prop}{Proposition}
\newtheorem{Cor}{Corollary}
\theoremstyle{definition}
\newtheorem{Def}{Definition}
\theoremstyle{remark}
\newtheorem{Rem}{Remark}
\DeclareMathOperator*{\disc}{disc}
\title{Propagation of boundary-induced discontinuity in stationary radiative transfer and its application to the optical tomography}
\author[1]{I-Kun Chen \thanks{ikun.chen@gmail.com} }
\author[2]{Daisuke Kawagoe \thanks{k.goe.dai@gmail.com} }
\affil[1]{Institute of Applied Mathematical Sciences, National Taiwan University}
\affil[2]{Institute of Applied Mathematics, Inha University}
\begin{document}
\maketitle
        
\begin{abstract}
We consider a boundary value problem of the stationary transport
equation with the incoming boundary condition in two or three dimensional bounded convex domains. We discuss discontinuity of the solution to the boundary value problem arising from discontinuous incoming boundary data, which we call the boundary-induced discontinuity. In particular, we give two kinds of sufficient conditions on the incoming boundary data for the boundary-induced discontinuity. We propose a method to reconstruct attenuation coefficient from jumps in boundary measurements.

\end{abstract}

\section{Introduction}
We consider the stationary transport equation:
\begin{equation} \label{eq:STE}
\xi \cdot \nabla_x f(x, \xi) + \mu_t(x) f(x, \xi) = \mu_s(x) \int_{S^{d-1}} p(x, \xi, \xi^\prime) f(x, \xi^\prime)\,d\sigma_{\xi^\prime}.
\end{equation}
The stationary transport equation describes propagation of photons \cite{Chand}. The function $f(x, \xi)$ stands for density of photons at a point $x \in \mathbb{R}^d$, $d = 2$ or $3$, with a direction $\xi \in S^{d-1}$. Here, $S^{d-1}$ is the unit sphere in $\mathbb{R}^d$. Two coefficients $\mu_t$ and $\mu_s$ and the integral kernel $p$ characterize absorption and scattering of photons in the media; they are called the attenuation coefficient, the scattering coefficient, and the scattering phase function, respectively. 

We introduce a domain in which we consider the equation (\ref{eq:STE}). Let $\Omega$ be a bounded convex domain in $\mathbb{R}^d$ with the $C^1$ boundary $\partial \Omega$. We assume that $\overline{\Omega} = \cup_{j = 1}^N \overline{\Omega_j}$, where $\Omega_j$, $1 \leq j \leq N$, are disjoint (open) subdomains of $\Omega$ with piecewise $C^1$ boundaries. Let $\Omega_0 := \cup_{j = 1}^N \Omega_j$. We assume that, for all $(x, \xi) \in \Omega \times S^{d-1}$, the half line $\{ x - t\xi | t \geq 0 \}$ intersects with $\partial \Omega_0$ at most finite times. In other words, for all $(x, \xi) \in \Omega \times S^{d-1}$, there exist positive integer $l(x, \xi)$ and real numbers $\{ t_j(x, \xi) \}_{j = 1}^{l(x, \xi)}$ such that $0 \leq t_1(x, \xi) < t_2(x, \xi) < \cdots < t_{l(x, \xi)}(x, \xi)$, $x - t\xi \in \partial \Omega_0$ if and only if $t = t_j(x, \xi)$, and $\sup_{(x, \xi) \in \Omega \times S^{d-1}} l(x, \xi) < \infty$. This assumption is called generalized convexity condition for $\Omega_0$ \cite{1993Anik}. We consider the equation (\ref{eq:STE}) in $\Omega_0 \times S^{d-1}$. In what follows, we use these notations $t_j (x, \xi)$ and $l(x, \xi)$, and we put $t_0(x, \xi) = 0$.

We introduce a boundary value problem to the equation (\ref{eq:STE}). Denote the incoming boundary $\Gamma_-$ and the outgoing boundary $\Gamma_+$ by 
\begin{equation*}
\Gamma_{\pm} := \{ (x, \xi) \in \partial \Omega \times S^{d-1} | \pm n(x) \cdot \xi > 0 \},
\end{equation*}
where $n(x)$ is the outer unit normal vector at $x \in \partial \Omega$ and $n(x) \cdot \xi$ is the inner product of two vectors $n(x)$ and $\xi$ in $\mathbb{R}^d$. We consider the incoming boundary value problem to seek a solution $f$ to the equation (\ref{eq:STE}) satisfying
\begin{equation} \label{eq:BC}
f(x, \xi) = f_0(x, \xi), \quad (x, \xi) \in \Gamma_-
\end{equation}
for a given function $f_0$ on $\Gamma_-$.


Our aim in this paper is to propose a way to reconstruct the attenuation coefficient $\mu_t$ from the boundary data, $f_0$ and $f|_{\Gamma_+}$, of the solution $f$ to the boundary value problem (\ref{eq:STE})-(\ref{eq:BC}) with $\mu_s$ and $p$ also unknown. This reconstruction is related to the optical tomography, which is a new medical imaging technology \cite{Arr}.

Anikonov et al. \cite{1993Anik} made use of propagation of the boundary-induced discontinuity, which is the discontinuity of the solution to the problem arising from discontinuous incoming boundary data, in order to solve the inverse problem. They showed that a jump of the boundary-induced discontinuity propagates along a positive characteristic line when the boundary data has a jump with respect to direction $\xi$, and it is observed as a jump of the outgoing boundary data on a discontinuous point, which locates on the tip of the characteristic line. The exponential decay of the jump contains information about the X-ray transform of the attenuation coefficient $\mu_t$, which is defined by 
\begin{equation*}
(X \mu_t)(x, \xi) := \int_{\mathbb{R}} \mu_t (x - r\xi)\,dr, \quad (x, \xi) \in \mathbb{R}^d \times S^{d-1}.
\end{equation*}
They applied the inverse X-ray transform to the observed data in order to determine the unknown coefficient $\mu_t$ from its image $X \mu_t$ \cite{Nat}. 

On the other hand, a jump of the boundary-induced discontinuity also propagates along a positive characteristic line when the boundary data has a jump with respect to space $x$. Aoki et al. \cite{Aoki} showed this property for the case of the two dimensional half homogeneous space with an incoming boundary data independent of $\xi$. The authors \cite{Kawa} extended their result to $d$-dimensional ($d \geq 2$) inhomogeneous slab domains with incoming boundary data depending on $\xi$, although we assumed a slight condition on two coefficients. In this paper, we further extend the result in \cite{Kawa} to a bounded  convex domain case. In addition to the discontinuity with respect to direction $\xi$, which is presented in Anikonov et al \cite{1993Anik}, we also discuss the discontinuity with respect to space $x$. 

$\mu_t$, $\mu_s$, and $p$ can be reconstructed by the use of an albedo operator \cite{Bal} \cite{Chou} \cite{Wang}. The albedo operator is the operator which maps the incoming boundary data $f_0$ to the outgoing boundary data $f|_{\Gamma_+}$. However, it is not feasible to observe an albedo operator from finite times experiments.  On the contrary, our proposed method uses jumps in boundary measurements, which are observed by finite times experiments. Besides, though we can only reconstruct $\nu_t$ our approach, it is the very coeffiecent contains the most important information. 

We assume that $\mu_t$ and $\mu_s$ are nonnegative bounded functions on $\mathbb{R}^d$ such that $\mu_t$ and $\mu_s$ are continuous on $\Omega_0$, $\mu_t(x) \geq \mu_s(x)$ for $x \in \Omega_0$, $\mu_t(x) = \mu_s(x) = 0$ for $x \in \mathbb{R}^d \backslash \Omega_0$, and discontinuity may occur only at  $\partial \Omega_0$. We also assume that the integral kernel $p$ is a nonnegative bounded function on $\mathbb{R}^d \times S^{d-1} \times S^{d-1}$ which is continuous on $\Omega_0 \times S^{d-1} \times S^{d-1}$ and $p(x, \xi, \xi^\prime) = 0$ for $(x, \xi, \xi^\prime) \in (\mathbb{R}^d \backslash \Omega_0) \times S^{d-1} \times S^{d-1}$, and satisfies
\begin{equation*}
\int_{S^{d-1}} p(x, \xi, \xi^\prime)\,d\sigma_{\xi^\prime} = 1
\end{equation*}
for all $(x, \xi) \in \Omega_0 \times S^{d-1}$. We regard the directional derivative $\xi \cdot \nabla_x f(x, \xi)$ as 
\begin{equation*}
\xi \cdot \nabla_x f(x, \xi) := \left. \frac{d}{dt} f(x + t\xi, \xi) \right|_{t = 0}.
\end{equation*}
Finally, the measure $d\sigma_{\xi^\prime}$ is the Lebesgue measure on the sphere $S^{d-1}$.

We introduce some notations. Let 
\begin{equation*}
D := (\Omega \times S^{d-1}) \cup \Gamma_-, \quad \overline{D} := D \cup \Gamma_+,
\end{equation*}
and we define two functions $\tau_\pm$ on $\overline{D}$ by 
\begin{equation*}
\tau_{\pm}(x, \xi) := \inf \{ t > 0 | x \pm t\xi \not\in \Omega \}.
\end{equation*}
Let $\Gamma_{-, \xi}$ and $\Gamma_{-, x}$ be projections of $\Gamma_-$ on $\partial \Omega$ and $S^{d-1}$ respectively;
\begin{equation*}
\Gamma_{-, \xi} := \{ x \in \partial \Omega | n(x) \cdot \xi < 0 \}, \quad \xi \in S^{d-1}
\end{equation*}
and
\begin{equation*}
\Gamma_{-, x} := \{ \xi \in S^{d-1} | n(x) \cdot \xi < 0 \}, \quad x \in \partial \Omega.
\end{equation*}
Let $\disc(f)$ be a set of the discontinuous points for a function $f$.

The first main result shows how the boundary-induced discontinuity propagates in the media.
\begin{Thm} \label{thm:MR1}
Suppose that a boundary data $f_0$ is bounded and that it satisfies at least one of the following two conditions.
\begin{enumerate}
\item $f_0(x, \cdot)$ is continuous on $\Gamma_{-, x}$ for almost all $x \in \partial \Omega$,
\item $f_0(\cdot, \xi)$ is continuous on $\Gamma_{-, \xi}$ for almost all $\xi \in S^{d-1}$.
\end{enumerate}
Then, there exists a unique solution $f$ to the boundary value problem $(\ref{eq:STE})$-$(\ref{eq:BC})$, and we have
\begin{equation*} \label{eq:D}
\disc(f) = \{ (x_* + t\xi_*, \xi_*) | (x_*, \xi_*) \in \disc(f_0), 0 \leq t < \tau_+(x_*, \xi_*) \}. 
\end{equation*}
\end{Thm}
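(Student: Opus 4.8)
The plan is to convert the boundary value problem into an integral equation by the method of characteristics and then to separate the solution into a \emph{ballistic} part, which transports the boundary data, and a \emph{scattering} part, which I will show is continuous. Integrating \eqref{eq:STE} along the characteristic $s\mapsto x-s\xi$ with the integrating factor $\exp(-\int_0^s\mu_t(x-r\xi)\,dr)$ and imposing \eqref{eq:BC} at $s=\tau_-(x,\xi)$, I would write $f=Jf_0+Kf$, where
\begin{equation*}
(Jf_0)(x,\xi):=\exp\!\Big(-\!\int_0^{\tau_-(x,\xi)}\!\mu_t(x-s\xi)\,ds\Big)\,f_0\big(x-\tau_-(x,\xi)\xi,\,\xi\big)
\end{equation*}
and
\begin{equation*}
(Kg)(x,\xi):=\int_0^{\tau_-(x,\xi)}\!\exp\!\Big(-\!\int_0^{s}\!\mu_t(x-r\xi)\,dr\Big)\mu_s(x-s\xi)\!\int_{S^{d-1}}\!p(x-s\xi,\xi,\xi')\,g(x-s\xi,\xi')\,d\sigma_{\xi'}\,ds.
\end{equation*}
The hypotheses $\mu_s\le\mu_t$ and $\int_{S^{d-1}}p\,d\sigma_{\xi'}=1$ give $\|Kg\|_\infty\le\|g\|_\infty$, and a uniform lower bound on $\int_0^{\tau_-}\mu_t\,ds$ over chords (or the mild extra assumption on the coefficients referred to in the introduction) upgrades this to $\|K\|<1$ on $L^\infty(\Omega\times S^{d-1})$. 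Hence $I-K$ is invertible, the Neumann series $f=\sum_{n=0}^\infty K^n Jf_0$ converges uniformly, and both existence and uniqueness follow.

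Next I would compute the discontinuity set of the ballistic term and show it is exactly the set appearing on the right-hand side of the theorem, which I denote $\mathcal D$. Since $\Omega$ is convex with $C^1$ boundary, the travel time $\tau_-$ and the entry map $(x,\xi)\mapsto x-\tau_-(x,\xi)\xi$ are continuous on $D$; and because $\mu_t$ is bounded, the attenuation $(x,\xi)\mapsto\int_0^{\tau_-(x,\xi)}\mu_t(x-s\xi)\,ds$ is continuous (a line integral of a bounded integrand over a continuously varying interval), so its exponential is continuous and strictly positive. Multiplication by a continuous nonvanishing factor and precomposition with the continuous entry map neither create nor destroy discontinuities, so $(x,\xi)\in\disc(Jf_0)$ if and only if $\big(x-\tau_-(x,\xi)\xi,\xi\big)\in\disc(f_0)$. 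Writing $x_*:=x-\tau_-(x,\xi)\xi\in\partial\Omega$, $\xi_*:=\xi$ and $t:=\tau_-(x,\xi)$, the point $(x,\xi)$ becomes $(x_*+t\xi_*,\xi_*)$ with $(x_*,\xi_*)\in\disc(f_0)\subset\Gamma_-$ and $t$ ranging over $[0,\tau_+(x_*,\xi_*))$ as $x$ runs along the chord from the entry point to the exit point; this is precisely $\mathcal D$. Here the sufficient conditions (1)--(2) enter: they guarantee that the jump of $f_0$ is genuinely transverse to the characteristic direction and is therefore not annihilated, so that the inclusion $\mathcal D\subseteq\disc(Jf_0)$ also holds.

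It then remains to prove that the scattering part $f-Jf_0=\sum_{n\ge1}K^nJf_0$ is continuous, and the heart of the matter is the regularity gain produced by $K$. The key lemma I would establish is that $Kg$ is continuous on $\Omega\times S^{d-1}$ whenever $g$ is bounded and, for almost every $y\in\Omega$, the slice $g(y,\cdot)$ is continuous for almost every direction. The mechanism is the inner angular integration: for such $g$ the function $H(y,\xi):=\int_{S^{d-1}}p(y,\xi,\xi')g(y,\xi')\,d\sigma_{\xi'}$ is continuous, since $p$ is continuous on $\Omega_0\times S^{d-1}\times S^{d-1}$ and, along any sequence $y_n\to y$, one has $g(y_n,\xi')\to g(y,\xi')$ for almost every $\xi'$, so dominated convergence applies; the outer line integral of the resulting continuous bounded integrand against the continuous weight then depends continuously on $(x,\xi)$, even across $\partial\Omega_0$, because crossing an interface alters a bounded integrand only on a set of measure zero in $s$. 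I would then run the induction $K^0Jf_0=Jf_0$ (discontinuous only on $\mathcal D$, whose slices in $\xi'$ are null for a.e.\ $y$ by the dimension count forced by (1)--(2)), $KJf_0$ continuous by the lemma, and $K^nJf_0=K(K^{n-1}Jf_0)$ continuous for $n\ge2$ since its argument is already continuous. Uniform convergence of the Neumann series makes the sum over $n\ge1$ continuous, whence $\disc(f)=\disc(Jf_0)=\mathcal D$.

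The step I expect to be the main obstacle is the continuity lemma for $K$, specifically the justification of the interchange of limit and integral for $H(y,\xi)$ as $y$ crosses the moving discontinuity surface of $g$ and as the ray crosses the fixed interfaces $\partial\Omega_0$. The delicate point is that a line integral of a merely bounded function is not in general continuous in its base point, so one must exploit that the angular average washes out the codimension-one discontinuity set: for each fixed $y$ the bad directions $\{\xi':(y,\xi')\in\mathcal D\}$ form a null subset of $S^{d-1}$ (a finite set when $d=2$, a curve when $d=3$), and this nullity, uniform enough along $y_n\to y$, is exactly what conditions (1)--(2) are designed to secure. Handling the grazing rays tangent to $\partial\Omega_0$, where $l(x,\xi)$ may jump, and confirming that these too form a negligible set controlled by the generalized convexity condition, is the remaining technical care.
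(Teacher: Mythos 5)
Your overall architecture (Neumann series for $f=Jf_0+Kf$, ballistic/scattered decomposition, identification of $\disc(Jf_0)$, continuity of the scattered part) matches the paper, but the heart of your plan --- the continuity lemma for $K$ --- has a genuine gap precisely in the case that is the paper's main contribution, namely condition 1. Your mechanism requires that for $y_n\to y$ one has $g(y_n,\xi')\to g(y,\xi')$ for almost every $\xi'$, i.e.\ that the bad-direction set $\{\xi' : (y,\xi')\in\disc(F_0)\}$ is null; but condition 1 does not secure this. Take $f_0(x,\xi)=\phi(x)$ with $\phi$ bounded and discontinuous on a set $E\subset\partial\Omega$ of positive surface measure: every slice $f_0(x,\cdot)$ is constant, so condition 1 holds, yet for any interior $y$ the set $\{\xi' : P(y,\xi')\in E\}$ has positive measure in $S^{d-1}$, so your dominated-convergence step in $\xi'$ fails (and the bad set is ``a finite set for $d=2$, a curve for $d=3$'' only for the special product-type data $(\ref{eq:BCJ})$, not under condition 1 in general). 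The missing idea is to make your ``angular averaging washes out the jump'' intuition rigorous by changing variables $\xi'\mapsto y=P_x(\xi')$, which transfers the integral to $\partial\Omega$ with Jacobian $d\sigma_{\xi'}=|n(y)\cdot(x-y)|\,|x-y|^{-d}\,d\sigma_y$ (the paper's Lemma \ref{lem:J}); then the jump variable $y$ is a \emph{fixed} integration parameter, the integrand $f_0\bigl(y,(x-y)/|x-y|\bigr)$ is continuous in $(x,\xi)$ for a.e.\ $y$ exactly by condition 1, and dominated convergence in $y$ gives continuity of $G$ (Lemma \ref{lem:G}). Your direct argument does work under condition 2 (the Anikonov case), since there the a.e.-fixed parameter is $\xi'$ itself. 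Note also that conditions 1--2 play no role in showing $\disc(Jf_0)=\mathcal{D}$: that step needs only continuity of $\tau_-$ and of the attenuation integral, and for the latter boundedness of $\mu_t$ is \emph{not} enough (as you yourself admit two paragraphs later); one uses continuity of $\mu_t$ on $\Omega_0$ plus the generalized convexity condition, which guarantees the moving line meets $\partial\Omega_0$ finitely often so that the integrand converges a.e.\ in $r$ (the paper's Lemma \ref{lem:LI}).

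Two smaller corrections. First, your contraction claim points the wrong way: a uniform \emph{lower} bound on $\int_0^{\tau_-}\mu_t\,ds$ is neither needed nor helpful (large attenuation pushes the factor toward $1$). Since $\mu_s\le\mu_t$ and $\int_{S^{d-1}}p\,d\sigma_{\xi'}=1$, one computes $\int_0^{\tau_-}\mu_s e^{-M_t}\,ds\le 1-\exp\bigl(-M_t(x,\xi;\tau_-(x,\xi))\bigr)\le 1-e^{-\|\mu_t\|_\infty R}=:M<1$, so $\|K\|<1$ is automatic from boundedness of $\mu_t$ and of the domain --- no extra hypothesis, which is exactly the paper's uniqueness estimate. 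Second, the theorem asserts a solution of the boundary value problem in the paper's pointwise sense, so after summing the Neumann series you must still verify that $f$ is differentiable along characteristics and satisfies $(\ref{eq:STE})$; this again requires continuity of $\int_{S^{d-1}}p(x,\xi,\xi')f(x,\xi')\,d\sigma_{\xi'}$ on $\Omega_0\times S^{d-1}$ (the paper's Lemma \ref{lem:I}, which rests on the same Lemma \ref{lem:G}), a step your proposal leaves implicit.
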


Theorem \ref{thm:MR1} shows that the boundary-induced discontinuity propagates only along a positive characteristic line starting from a discontinuous point of the incoming boundary data. Here, we call a bounded function $f$ on $D$ a solution to the boundary value problem (\ref{eq:STE})-(\ref{eq:BC}) if (i) it has the directional derivative $\xi \cdot \nabla_x f(x, \xi)$ at all $(x, \xi) \in \Omega_0 \times S^{d-1}$, (ii) it satisfies the stationary transport equation (\ref{eq:STE}) for all $(x, \xi) \in \Omega_0 \times S^{d-1}$ and the boundary condition (\ref{eq:BC}) for all $(x, \xi) \in \Gamma_-$, (iii) $f(\cdot, \xi)$ is continuous along the line $\{ x + t\xi | t \in \mathbb{R}\} \cap (\Omega \cup \Gamma_{-, \xi})$ for all $(x, \xi) \in D$, and (iv) $\xi \cdot \nabla_x f(\cdot, \xi)$ is continuous on the open line segments $\{ x + t\xi | t \in (t_{j_-1}(x, \xi), t_j(x, \xi) ) \}$, $j = 1, \ldots, l(x, \xi)$ with $t_0(x, \xi) = 0$ for all $(x, \xi) \in \Omega_0 \times S^{d-1}$. Also, a positive characteristic line from a point $(x, \xi) \in \Gamma_-$ is defined by $\{ (x + t\xi, \xi) | t \geq 0\}$.

\begin{Rem}
Theorem \ref{thm:MR1} implies that, for a bounded continuous boundary data $f_0$ on $\Gamma_-$, there exists a unique solution $f$, which is bounded continuous $D$.
\end{Rem}

\begin{Rem}
Anikonov et al. \cite{1993Anik} showed Theorem \ref{thm:MR1} with the condition 2. Our main contribution is to show Theorem \ref{thm:MR1} with the condition 2.
\end{Rem}


As the second main result, we shall discuss the boundary-induced discontinuity of the solution extended up to $\Gamma_+$. In other words, we can extend the domain of the solution $f$ up to $\Gamma_+$ and we see that the boundary-induced discontinuity propagates along a positive characteristic line up to $\Gamma_+$. 
\begin{Thm} \label{thm:MR2}
Let $f$ be the solution to the boundary value problem $(\ref{eq:STE})$-$(\ref{eq:BC})$. Then, it can be extended up to $\Gamma_+$, which is denoted by $\overline{f}$, by
\begin{equation*}
\overline{f}(x, \xi) := 
\begin{cases}
f(x, \xi), \quad &(x, \xi) \in D, \\
\displaystyle \lim_{t \downarrow 0} f(x - t\xi, \xi), \quad &(x, \xi) \in \Gamma_+.
\end{cases}
\end{equation*}
Moreover, we have
\begin{equation*}
\disc(\overline{f}) = \{ (x_* + t\xi_*, \xi_*) | (x_*, \xi_*) \in \disc(f_0), 0 \leq t \leq \tau_+(x_*, \xi_*) \}. 
\end{equation*}
\end{Thm}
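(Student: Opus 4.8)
The plan is to reduce Theorem~\ref{thm:MR2} to the analysis already carried out for Theorem~\ref{thm:MR1} by pushing the mild-solution representation up to the outgoing boundary $\Gamma_+$. Recall that along each characteristic the solution admits a decomposition $f = J_0 + J_s$, where the ballistic part is
\begin{equation*}
J_0(x,\xi) = f_0\big(\beta(x,\xi)\big)\,\exp\!\left( -\int_0^{\tau_-(x,\xi)} \mu_t(x - s\xi)\,ds \right),
\end{equation*}
with $\beta(x,\xi) := (x - \tau_-(x,\xi)\xi,\, \xi) \in \Gamma_-$ the backward entry point, and $J_s$ is the scattering term built from $\mu_s \int_{S^{d-1}} p\, f\, d\sigma_{\xi'}$. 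First I would check that, because $(x,\xi) \in \Gamma_+$ is non-grazing ($n(x)\cdot\xi > 0$) and $\partial\Omega$ is $C^1$ and convex, the backward exit time $\tau_-$, and hence $\beta$ and the exponential factor, extend continuously from $\Omega \times S^{d-1}$ up to $\Gamma_+$. Feeding $x - t\xi$ into the representation and letting $t \downarrow 0$ then shows that the limit defining $\overline f$ exists and equals the value of $J_0 + J_s$ obtained from the same formula evaluated at the boundary point; this establishes that $\overline f$ is well defined on $\Gamma_+$.

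Next I would show that the discontinuities of $\overline f$ are exactly those of the ballistic part. The exponential factor is continuous and strictly positive (as $\mu_t$ is bounded and $\Omega$ is bounded), so it does not affect the discontinuity set. For the scattering term I would invoke the averaging mechanism behind Theorem~\ref{thm:MR1}: for each fixed $x$ the directions in which $f(x,\cdot)$ is discontinuous form a negligible set, so $\int_{S^{d-1}} p(x,\xi,\xi') f(x,\xi')\,d\sigma_{\xi'}$ is continuous, and a dominated-convergence argument extends the continuity of $J_s$ from $D$ up to $\Gamma_+$. Consequently
\begin{equation*}
\disc(\overline f) = \disc\big(f_0 \circ \beta\big).
\end{equation*}

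It then remains to identify $\disc(f_0 \circ \beta)$ with the stated set. Since $\beta$ is continuous on $\overline D$ away from grazing, any point whose entry point is a continuity point of $f_0$ is a continuity point of $\overline f$, giving $\disc(\overline f) \subset \beta^{-1}(\disc(f_0))$. For the reverse inclusion, and in particular to show that each tip $(x_* + \tau_+(x_*,\xi_*)\xi_*, \xi_*) \in \Gamma_+$ with $(x_*,\xi_*) \in \disc(f_0)$ is genuinely a discontinuity of $\overline f$, I would use that, for fixed $\xi$, the map carrying an outgoing boundary point $x$ to the other endpoint $x - \tau_-(x,\xi)\xi \in \Gamma_{-,\xi}$ of its chord is a homeomorphism onto an open subset of $\Gamma_{-,\xi}$; its openness lets me approach the tip within $\Gamma_+$ through points whose entry points lie on either side of the jump of $f_0$ at $(x_*,\xi_*)$, realizing that jump (undamped, up to the positive exponential factor) at the tip. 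Finally, $\beta^{-1}(\{(x_*,\xi_*)\})$ is precisely the closed chord $\{(x_* + t\xi_*,\xi_*) : 0 \le t \le \tau_+(x_*,\xi_*)\}$, whose part $0 \le t < \tau_+(x_*,\xi_*)$ already appears in Theorem~\ref{thm:MR1} and whose endpoint $t = \tau_+(x_*,\xi_*)$ is the newly added tip on $\Gamma_+$; taking the union over $\disc(f_0)$ yields the claimed formula.

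The step I expect to be the main obstacle is the control of the entry-point map $\beta$ up to $\Gamma_+$: establishing that it extends continuously and, for fixed $\xi$, is an open map (a local homeomorphism) on the non-grazing part of the boundary, so that the jump of $f_0$ is faithfully transported to the tip rather than being smeared or collapsed. This is where the $C^1$-regularity and convexity of $\partial\Omega$ enter decisively, and where care is needed to exclude the grazing directions, along which $\tau_-$ may fail to be continuous. The accompanying extension of the continuity of $J_s$ up to $\Gamma_+$ is more routine, but it must be carried out uniformly enough to survive passage to the boundary limit.
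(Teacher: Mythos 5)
Your overall architecture is the same as the paper's: split the solution into the ballistic part $J_0=\overline{F_0}$ and the Duhamel scattering part $J_s=\overline{F_1}$, extend both to $\Gamma_+$, show the ballistic part carries the discontinuities on the \emph{closed} chords, and show the scattering part is continuous on $\overline{D}$. Two variations are legitimate and harmless: you obtain the boundary limit by passing to the limit in the integral representation (using $\tau_-(x-t\xi,\xi)=\tau_-(x,\xi)-t$, $P(x-t\xi,\xi)=P(x,\xi)$ and dominated convergence), where the paper instead uses the fundamental theorem of calculus along the characteristic together with boundedness of $\xi\cdot\nabla_x f$ to get the Lipschitz estimate $|f(x,\xi)-f(x-t\xi,\xi)|\le Ct$; and you treat the whole scattering term at once via continuity of the velocity average $\int_{S^{d-1}}p(x,\xi,\xi')f(x,\xi')\,d\sigma_{\xi'}$ (the paper's Lemma~\ref{lem:I}) plus a Lemma~\ref{lem:ELI}-type dominated-convergence extension, where the paper extends the Neumann series $\overline{f}^{(n)}$ term by term and uses uniform convergence.

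The genuine gap is in your justification of the averaging step. The premise ``for each fixed $x$ the directions in which $f(x,\cdot)$ is discontinuous form a negligible set'' is valid under condition 1 of Theorem~\ref{thm:MR1}, since then $\disc(f_0)\subset N\times S^{d-1}$ with $N\subset\partial\Omega$ null, and $P_x^{-1}(y)=(x-y)/|x-y|$ is Lipschitz on $\partial\Omega$ for interior $x$, so null sets pull back to null sets --- this is exactly where the paper's Jacobian computation (Lemma~\ref{lem:J}) does its work, and you should state this null-preservation explicitly rather than only the homeomorphism property, which does not preserve null sets. But the premise can \emph{fail} under condition 2: take $f_0(x,\xi)=\mathbf{1}_Q(\xi)$ with $Q$ a countable dense subset of $S^{d-1}$; condition 2 holds trivially, yet at every interior $x$ the set of discontinuity directions of $f(x,\cdot)$ is all of $S^{d-1}$. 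The average is nevertheless continuous, but by the complementary mechanism used in the paper's Lemma~\ref{lem:G}: for almost every \emph{fixed} $\xi'$ the map $x\mapsto f_0(P(x,\xi'),\xi')$ is continuous (section continuity of $f_0(\cdot,\xi')$ plus continuity of $\tau_-$), and dominated convergence is applied in $\xi'$; no negligibility of discontinuity directions is needed or available. Your tip argument has the same overspecialization: approaching the tip within $\Gamma_+$ at fixed $\overline{\xi}$ through entry points ``on either side of the jump'' presumes the jump of $f_0$ is witnessed at fixed direction, which fits the data $(\ref{eq:BCJ})$ but not general $f_0$ under condition 2, where the discontinuity at $(x_*,\xi_*)$ may only be seen by varying $\xi$. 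The general fix is to push forward arbitrary witnessing sequences $(y_n,\eta_n)\to(x_*,\xi_*)$ in $\Gamma_-$ along their own characteristics with times $t_n\to\tau_+(x_*,\xi_*)$, using continuity and positivity of the exponential factor and continuity of $\tau_\pm$ at non-grazing points. With these two repairs (both available from the paper's Lemmas~\ref{lem:G}, \ref{lem:I} and \ref{lem:J}), your plan goes through.
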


We state the decay of the boundary-induced discontinuity in some situation. Let $\gamma$ be two points in $\partial \Omega$ when $d = 2$, while let $\gamma$ be a simple closed curve in $\partial \Omega$ when $d = 3$. Then, $\gamma$ splits $\partial \Omega$ into two connected components $A$ and $B$, that is $\partial \Omega = A \cup B \cup \gamma$ and $A \cap B = A \cap \gamma = B \cap \gamma = \emptyset$. We put an incoming boundary data $f_0$ by
\begin{equation} \label{eq:BCJ}
f_0(x, \xi) =
\begin{cases}
I, \quad (x, \xi) \in ( (A \cup \gamma) \times S^{d-1} ) \cap \Gamma_-,\\
0, \quad (x, \xi) \in (B \times S^{d-1} ) \cap \Gamma_-,
\end{cases}
\end{equation} 
where $I$ is a constant. We note that $f_0$ satisfies the condition 2 of Theorem \ref{thm:MR1}, and that $\disc(f_0) = \{ (x_*, \xi_*) | x_* \in \gamma, \xi_* \in \Gamma_{-, x_*} \}$.


For $(\overline{x}, \overline{\xi}) \in \disc(\overline{f})$, we define a jump $[\overline{f}] (\overline{x}, \overline{\xi})$ by
\begin{equation*}
[\overline{f}](\overline{x}, \overline{\xi}) := \lim_{\substack{x \rightarrow \overline{x}, \\ P(x, \overline{\xi}) \in (A \cup \gamma)}} f(x, \overline{\xi}) - \lim_{\substack{x \rightarrow \overline{x}, \\ P(x, \overline{\xi}) \in B}} f(x, \overline{\xi}), 
\end{equation*}
where 
\begin{equation*}
P(x, \xi) := x - \tau_-(x, \xi)\xi.
\end{equation*} 
We note that, in our situation, $[f_0](x, \xi) = I$ for all $(x, \xi) \in \disc(f_0) = (\gamma \times S^{d-1}) \cap \Gamma_-$. In this situation, we have the following theorem, which is the most important in this paper.

\begin{Thm} \label{thm:MR3}
Let $\overline{f}$ be the extended solution to the boundary value problem $(\ref{eq:STE})$-$(\ref{eq:BC})$ with the incoming boundary data given by $(\ref{eq:BCJ})$, and let $(x^*, \xi^*) \in \disc(\overline{f})$. Then,
\begin{equation*}
[\overline{f}](x^*, \xi^*) = I \exp \left( - \int_0^{\tau_-(x^*, \xi^*)} \mu_t(x^* - r\xi^*)\,dr \right).
\end{equation*}
\end{Thm}

In particular, we take a point $(x^*, \xi^*) \in \disc(\overline{f}) \cap \Gamma_+$. From Theorem \ref{thm:MR3}, we have
\begin{equation*}
X \mu_t (x^*, \xi^*) = \int_0^{\tau_-(x^*, \xi^*)} \mu_t(x^* - r\xi^*)\,dr = - \log \left( [\overline{f}](x^*, \xi^*) / I \right).
\end{equation*}
The right hand side is obtained from observed data. By arranging $\gamma$, we can observe the image $X \mu_t$ of the X-ray transform of $\mu_t$. Then, applying the well-known method in \cite{Nat}, we can reconstruct the attenuation coefficient $\mu_t$..

The ingredient of the rest part in this paper is as follows. In section 2, we derive an integral equation from the boundary value problem (\ref{eq:STE})-(\ref{eq:BC}), and we show existence and uniqueness of solutions to the derived integral equation. In section 3, we discuss regularity of the solution to the integral equation. Especially, we decompose the solution into two parts, the discontinuous part and the continuous part. In section 4, we prove that the solution to the integral equation is indeed that of the boundary value problem (\ref{eq:STE})-(\ref{eq:BC}) under the assumption in Theorem \ref{thm:MR1}. In section 5, we extend the definition domain of the solution $f$ up to the outgoing boundary $\Gamma_+$ and discuss the boundary-induced discontinuity of the extended solution. In section 6, we discuss the decay of a jump of  the boundary-induced discontinuity. In other words, we prove Theorem \ref{thm:MR3}. 

\section{Existence and uniqueness of solutions to the stationary transport equation}
We derive an integral equation from the boundary value problem (\ref{eq:STE})-(\ref{eq:BC}), and we show existence and uniqueness of a solution to the derived integral equation.

For all $(x, \xi) \in D$, integrating the equation (\ref{eq:STE}) with respect to $x$ along the line $\{x - t\xi | t > 0\}$ until the line intersects with the boundary $\partial \Omega$ and taking the boundary condition (\ref{eq:BC}) into consideration, we obtain the following integral equation: 
\begin{align} \label{eq:IE}
f(x, \xi) =& \exp \Bigl(- M_t \bigl(x, \xi; \tau_-(x, \xi) \bigr) \Bigr) f_0(P(x, \xi), \xi) \nonumber\\
&+ \int_0^{\tau_-(x, \xi)} \mu_s(x - s\xi) \exp \Bigl( - M_t (x, \xi; s) \Bigr) \int_{S^{d-1}} p(x - s\xi, \xi, \xi^\prime) f(x - s\xi, \xi^\prime)\,d\sigma_{\xi^\prime}ds,
\end{align}
where
\begin{equation*}
M_t (x, \xi; s) := \int_0^s \mu_t(x - r\xi)\,dr.
\end{equation*}
We call a bounded function $f$ on $D$ satisfying the integral equation (\ref{eq:IE}) for all $(x, \xi) \in D$ a solution to the equation (\ref{eq:IE}). We note that, although solutions to the boundary value problem (\ref{eq:STE})-(\ref{eq:BC}) satisfy the integral equation (\ref{eq:IE}), the converse does not hold in general. However, as we will see later in section 4, the solution to the integral equation (\ref{eq:IE}) is also the solution to the boundary value problem (\ref{eq:STE})-(\ref{eq:BC}) under the assumption given in Theorem 1. Therefore, we focus on discussing existence and uniqueness of a solution to the integral equation (\ref{eq:IE}).

\begin{Prop}
The solution to the integral equation $(\ref{eq:IE})$ is unique, if it exists.
\end{Prop}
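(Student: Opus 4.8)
The plan is to show that the map defined by the right-hand side of (\ref{eq:IE}) is a strict contraction on the space of bounded functions on $D$, from which uniqueness follows immediately. Suppose $f_1$ and $f_2$ are both solutions to (\ref{eq:IE}) and set $g := f_1 - f_2$. The inhomogeneous term $\exp(-M_t(x,\xi;\tau_-(x,\xi)))f_0(P(x,\xi),\xi)$ is identical for $f_1$ and $f_2$, so it cancels upon subtraction, and $g$ satisfies the homogeneous equation $g = Kg$, where
\begin{equation*}
(Kg)(x,\xi) := \int_0^{\tau_-(x,\xi)} \mu_s(x - s\xi)\exp\left(-M_t(x,\xi;s)\right)\int_{S^{d-1}} p(x - s\xi, \xi, \xi')\,g(x - s\xi, \xi')\,d\sigma_{\xi'}\,ds.
\end{equation*}
Thus it suffices to prove that $g = Kg$ together with boundedness of $g$ forces $g \equiv 0$.

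The key step is the contraction estimate. Taking absolute values, bounding $|g(x - s\xi, \xi')| \le \|g\|_\infty$, and using the normalization $\int_{S^{d-1}} p(x - s\xi, \xi, \xi')\,d\sigma_{\xi'} = 1$, I obtain
\begin{equation*}
|(Kg)(x,\xi)| \le \|g\|_\infty \int_0^{\tau_-(x,\xi)} \mu_s(x - s\xi)\exp\left(-M_t(x,\xi;s)\right)\,ds.
\end{equation*}
Here the subconservative hypothesis $\mu_s \le \mu_t$ is decisive: replacing $\mu_s$ by $\mu_t$ and noting that $\partial_s M_t(x,\xi;s) = \mu_t(x - s\xi)$, the integrand becomes a perfect derivative, so that
\begin{equation*}
\int_0^{\tau_-(x,\xi)} \mu_s(x - s\xi)\exp\left(-M_t(x,\xi;s)\right)\,ds \le 1 - \exp\left(-M_t(x,\xi;\tau_-(x,\xi))\right).
\end{equation*}
Because $\mu_t$ is bounded and $\Omega$ is bounded (so that $\tau_-$ is bounded), the optical depth $M_t(x,\xi;\tau_-(x,\xi))$ is uniformly bounded by some constant $C < \infty$; hence the right-hand side is at most $q := 1 - e^{-C}$, which lies in $[0,1)$.

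Combining these gives $\|Kg\|_\infty \le q\|g\|_\infty$ with $q < 1$, so $K$ is a contraction on the bounded functions on $D$. Since $g = Kg$, we conclude $\|g\|_\infty \le q\|g\|_\infty$, which forces $\|g\|_\infty = 0$, i.e. $f_1 = f_2$. The only genuinely delicate point is the contraction estimate, and specifically recognizing that the assumption $\mu_t \ge \mu_s$ is exactly what is needed to bound the scattering gain by $1 - e^{-M_t} < 1$ rather than by a quantity that could exceed one; the remaining steps are routine, relying only on the normalization of $p$ and the boundedness of $\mu_t$ and of the domain.
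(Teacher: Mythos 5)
Your proof is correct and follows essentially the same route as the paper: subtract the two solutions, use the normalization of $p$ and the hypothesis $\mu_s \le \mu_t$ to bound the homogeneous integral by $1 - \exp\bigl(-M_t(x,\xi;\tau_-(x,\xi))\bigr)$, and use boundedness of $\mu_t$ and of $\Omega$ to get a contraction constant strictly below $1$. The only cosmetic difference is that the paper justifies the \emph{perfect derivative} step by splitting the integral at the points $t_j(x,\xi)$ from the generalized convexity condition, where $\mu_t$ may jump, whereas you invoke it directly; this is harmless since $s \mapsto M_t(x,\xi;s)$ is Lipschitz, hence absolutely continuous, so the fundamental theorem of calculus applies as you use it.
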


\begin{proof}
Let $f_1$ and $f_2$ be two solutions to the integral equation (\ref{eq:IE}). Then the difference $\tilde{f} := f_1 - f_2$ is also bounded on $D$ and satisfies the following integral equation:
\begin{equation*}
\tilde{f}(x, \xi) = \int_0^{\tau_-(x, \xi)} \mu_s(x - s\xi) \exp \Bigl(- M_t (x, \xi; s) \Bigr) \int_{S^{d-1}} p(x - s\xi, \xi, \xi^\prime) \tilde{f} (x - s\xi, \xi^\prime)\,d\sigma_{\xi^\prime}ds
\end{equation*}
for all $(x, \xi) \in D$. Then, we have
\begin{align*}
|\tilde{f}(x, \xi)| \leq& \left( \sup_{(x, \xi) \in D} | \tilde{f} (x, \xi)| \right) \int_0^{\tau_-(x, \xi)} \mu_s(x - s\xi) \exp \Bigl(- M_t (x, \xi; s) \Bigr)ds\\
\leq& \left( \sup_{(x, \xi) \in D} | \tilde{f} (x, \xi)| \right) \int_0^{\tau_-(x, \xi)} \mu_t(x - s\xi) \exp \Bigl(- M_t (x, \xi; s) \Bigr)ds\\
=& - \left( \sup_{(x, \xi) \in D} | \tilde{f} (x, \xi)| \right) \sum_{j = 1}^{l(x, \xi)} \int_{t_{j-1}(x, \xi)}^{t_j(x, \xi)} \dfrac{d}{ds} \exp \Bigl(- M_t (x, \xi; s) \Bigr)\,ds\\
=& \left( \sup_{(x, \xi) \in D} | \tilde{f} (x, \xi)| \right) \left( 1 - \exp \Bigl(- M_t (x, \xi; \tau_-(x, \xi)) \Bigr) \right)\\
\leq& M \left( \sup_{(x, \xi) \in D} | \tilde{f} (x, \xi)| \right)
\end{align*}
for all $(x, \xi) \in D$, where
\begin{equation*}
M := \sup_{(x, \xi) \in D} \left( 1 - \exp \Bigl(- M_t (x, \xi; \tau_-(x, \xi)) \Bigr) \right)
\end{equation*}
and $0 \leq M < 1$ by the boundedness of $\mu_t$. We recall that $l(x, \xi)$ and $\{ t_j(x, \xi) \}_{j=0}^{l(x, \xi)}$ with $t_0(x, \xi) = 0$ are numbers appeared in the statement of the generalized convexity condition. Also, we emphasize that the supremum in this paper is not the essential supremum. Therefore,
\begin{equation*}
\sup_{(x, \xi) \in D} | \tilde{f} (x, \xi)| \leq M \left( \sup_{(x, \xi) \in D} | \tilde{f} (x, \xi)| \right), 
\end{equation*}
and it implies $\sup_{(x, \xi) \in D} | \tilde{f} (x, \xi)| = 0$, that is, $f_1 = f_2$ on $D$.
\end{proof}

We prove existence of a solution by iteration (see \cite{Kawa}). Define a sequence of functions $\{f^{(n)}\}_{n \geq 0}$ on $D$ by
\begin{equation} \label{eq:F0}
f^{(0)}(x, \xi) := \exp \Bigl(- M_t(x, \xi; \tau_- (x, \xi)) \Bigr) f_0(P(x, \xi), \xi),
\end{equation}
and
\begin{align} \label{eq:F1}
f^{(n+1)}(x, \xi) :=& \int_0^{\tau_-(x, \xi)} \mu_s(x - s\xi) \exp \Bigl(- M_t (x, \xi; s) \Bigr) \nonumber\\
&\quad \times \int_{S^{d-1}} p(x - s\xi, \xi, \xi^\prime) f^{(n)}(x - s\xi, \xi^\prime)\,d\sigma_{\xi^\prime}ds.
\end{align}
In fact, the sum $f := \sum_{n = 0}^\infty f^{(n)}$ is a solution to the integral equation (\ref{eq:IE}). To see this, we give the following two propositions.

\begin{Prop} \label{prop:P1}
Suppose that the boundary data $f_0$ is bounded on $\Gamma_-$. Then, each $f^{(n)}$ is also bounded on $D$.
\end{Prop}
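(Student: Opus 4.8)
The plan is to argue by induction on $n$, the crucial ingredient being exactly the radial estimate already established in the proof of the preceding uniqueness proposition. Throughout, write $\| g \|_\infty := \sup_{(x, \xi) \in D} |g(x, \xi)|$ and set $C := \sup_{\Gamma_-} |f_0| < \infty$ by hypothesis.

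For the base case I would observe that, since $\mu_t \geq 0$, we have $M_t(x, \xi; \tau_-(x, \xi)) \geq 0$ and hence $0 \leq \exp(-M_t(x, \xi; \tau_-(x, \xi))) \leq 1$. Moreover, for every $(x, \xi) \in D$ the point $(P(x, \xi), \xi)$ lies on $\Gamma_-$ (for $(x,\xi)\in\Omega\times S^{d-1}$ the ray enters $\Omega$ at $P(x,\xi)$ in the direction $\xi$, so $n(P(x,\xi))\cdot\xi<0$; for $(x,\xi)\in\Gamma_-$ one has $\tau_-=0$ and $P(x,\xi)=x$). Thus $f_0(P(x, \xi), \xi)$ is legitimately defined and, from (\ref{eq:F0}), $|f^{(0)}(x, \xi)| \leq C$ for all $(x, \xi) \in D$; that is, $f^{(0)}$ is bounded.

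For the inductive step, assume $f^{(n)}$ is bounded. I would first bound the inner angular integral in (\ref{eq:F1}): for $s \in (0, \tau_-(x, \xi))$ the point $x - s\xi$ lies in $\Omega$, so $(x - s\xi, \xi') \in \Omega \times S^{d-1} \subset D$ and the inductive hypothesis applies to $f^{(n)}(x - s\xi, \xi')$; using $p \geq 0$ together with the normalization $\int_{S^{d-1}} p(x - s\xi, \xi, \xi')\,d\sigma_{\xi'} = 1$ gives
\begin{equation*}
\left| \int_{S^{d-1}} p(x - s\xi, \xi, \xi') f^{(n)}(x - s\xi, \xi')\,d\sigma_{\xi'} \right| \leq \| f^{(n)} \|_\infty .
\end{equation*}
Substituting this into (\ref{eq:F1}) and bounding the remaining radial integral exactly as in the uniqueness proof—replacing $\mu_s$ by $\mu_t$ via $\mu_s \leq \mu_t$ and telescoping $-\tfrac{d}{ds}\exp(-M_t(x, \xi; s))$ over the intervals $(t_{j-1}(x, \xi), t_j(x, \xi))$ furnished by the generalized convexity condition—I would obtain
\begin{equation*}
|f^{(n+1)}(x, \xi)| \leq \| f^{(n)} \|_\infty \left( 1 - \exp\bigl(-M_t(x, \xi; \tau_-(x, \xi))\bigr) \right) \leq M \| f^{(n)} \|_\infty ,
\end{equation*}
with $0 \leq M < 1$ as in that proof. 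Hence $\| f^{(n+1)} \|_\infty \leq M \| f^{(n)} \|_\infty$, so $f^{(n+1)}$ is bounded.

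Iterating the two displayed bounds yields $\| f^{(n)} \|_\infty \leq M^n C$ for every $n \geq 0$, which in particular establishes that each $f^{(n)}$ is bounded on $D$ and, as a byproduct, foreshadows the geometric convergence of $\sum_{n} f^{(n)}$ needed for the subsequent existence proposition. I do not anticipate any serious obstacle: the only points requiring care are verifying that the arguments $(x - s\xi, \xi')$ at which $f^{(n)}$ is evaluated genuinely lie in $D$, so that the inductive hypothesis applies, and that the telescoping identity is valid under the piecewise-$C^1$ and continuity assumptions on $\mu_t$; both are guaranteed by the generalized convexity condition. The strict inequality $M < 1$ is what will make the iteration contractive for the convergence argument, but for the mere boundedness of each individual $f^{(n)}$ asserted here even $M \leq 1$ would suffice.
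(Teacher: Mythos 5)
Your proof is correct and follows essentially the same route as the paper's: induction on $n$, with the base case bounded by $\sup_{\Gamma_-}|f_0|$ since $\exp(-M_t)\leq 1$, and the inductive step using the normalization $\int_{S^{d-1}}p\,d\sigma_{\xi^\prime}=1$ together with the bound $\int_0^{\tau_-}\mu_s\exp(-M_t)\,ds\leq M<1$ obtained, exactly as in the uniqueness proposition, by replacing $\mu_s$ with $\mu_t$ and telescoping over the intervals supplied by the generalized convexity condition. Your extra care in checking that $(P(x,\xi),\xi)\in\Gamma_-$ and that the evaluation points $(x-s\xi,\xi^\prime)$ lie in $D$, and your closing observation that $\|f^{(n)}\|_\infty\leq M^nC$, are welcome refinements (the latter is precisely the estimate the paper defers to Proposition \ref{prop:P2}), but the substance matches the paper's argument.
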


\begin{Prop} \label{prop:P2}
Suppose that the boundary data $f_0$ is bounded on $\Gamma_-$. Then, the sum $\sum_{n = 0}^\infty f^{(n)}(x, \xi)$ is absolutely and uniformly convergent on $D$.
\end{Prop}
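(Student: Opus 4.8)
The plan is to prove geometric decay of the sup-norms and then invoke the Weierstrass $M$-test. By Proposition \ref{prop:P1} each $f^{(n)}$ is bounded on $D$, so the quantity $a_n := \sup_{(x, \xi) \in D} |f^{(n)}(x, \xi)|$ is finite. I would show $a_{n+1} \leq M a_n$ for the very same constant $M \in [0, 1)$ appearing in the uniqueness proof of Proposition 1, whence $a_n \leq M^n a_0$ and $\sum_{n=0}^\infty a_n \leq a_0 / (1 - M) < \infty$; since $a_n$ is a majorant independent of $(x, \xi)$, this yields absolute and uniform convergence on $D$.

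To obtain the contraction, I would take absolute values in the recursion (\ref{eq:F1}). Since $p \geq 0$ and $\int_{S^{d-1}} p(x - s\xi, \xi, \xi^\prime)\,d\sigma_{\xi^\prime} = 1$, the inner spherical integral is bounded in modulus by $a_n$; pulling this out and using $\mu_s \leq \mu_t$ on $\Omega_0$ gives
\[
|f^{(n+1)}(x, \xi)| \leq a_n \int_0^{\tau_-(x, \xi)} \mu_t(x - s\xi) \exp \Bigl( - M_t(x, \xi; s) \Bigr)\,ds.
\]
The remaining integral is evaluated exactly as in the proof of Proposition 1: splitting $[0, \tau_-(x, \xi)]$ along the points $t_j(x, \xi)$ supplied by the generalized convexity condition, the integrand equals $-\tfrac{d}{ds}\exp(-M_t(x, \xi; s))$ on each subinterval, and the pieces telescope to $1 - \exp(-M_t(x, \xi; \tau_-(x, \xi)))$. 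Taking the supremum over $D$ then yields $a_{n+1} \leq M a_n$ with $M = \sup_{(x, \xi) \in D}\bigl(1 - \exp(-M_t(x, \xi; \tau_-(x, \xi)))\bigr)$.

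The one point that genuinely needs the hypotheses is the strict bound $M < 1$. Because $\mu_t$ is bounded and $\Omega$ is bounded (so $\tau_-$ is bounded by the diameter of $\Omega$), the quantity $M_t(x, \xi; \tau_-(x, \xi))$ is uniformly bounded above; hence $\exp(-M_t(x, \xi; \tau_-(x, \xi)))$ is bounded below by a positive constant and $M < 1$. This is the only place where the boundedness of $\mu_t$ is essential, and I expect it to be the main (if mild) obstacle. Finally, since $a_0 = \sup_D |f^{(0)}| \leq \sup_{\Gamma_-} |f_0| < \infty$ by (\ref{eq:F0}) and $\exp(-M_t) \leq 1$, the estimate $\sum_{n=0}^\infty a_n < \infty$ supplies a summable, $(x, \xi)$-independent majorant for the series $\sum_{n=0}^\infty f^{(n)}$, and the $M$-test delivers the claimed absolute and uniform convergence on $D$.
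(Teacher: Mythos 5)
Your proposal is correct and follows essentially the same route as the paper: the paper cites the inequality (\ref{ineq:E}) from the proof of Proposition \ref{prop:P1} (itself obtained by the telescoping computation with $\mu_s \leq \mu_t$ and the generalized convexity condition that you reproduce), iterates it to get $\sup_D |f^{(n)}| \leq M^n \sup_{\Gamma_-}|f_0|$ with the same constant $M \in [0,1)$, and sums the geometric series to conclude absolute and uniform convergence. Your explicit verification that $M < 1$ via the boundedness of $\mu_t$ and of $\tau_-$ is exactly what the paper leaves implicit in the phrase ``$0 \leq M < 1$ by the boundedness of $\mu_t$.''
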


\begin{proof}[Proof of Proposition \ref{prop:P1}]
We use induction with respect to $n$. For $n = 0$, we have
\begin{equation*}
|f^{(0)}(x, \xi)| \leq \exp \Bigl(- M_t(x, \xi; \tau_- (x, \xi)) \Bigr) |f_0(x-\tau_-(x, \xi)\xi, \xi)| \leq \sup_{(x, \xi) \in \Gamma_-} | f_0 (x, \xi) |
\end{equation*}
for all $(x, \xi) \in \Omega \times S^{d-1}$. This estimate implies that $f^{(0)}$ is bounded on $D$.

In case that $f^{(n)}$ is bounded on $D$ for some $n \in \mathbb{N}$, we have
\begin{align}
| f^{(n+1)} (x, \xi) | \leq& \int_0^{\tau_-(x, \xi)} \mu_s(x - s\xi) \exp \Bigl(- M_t (x, \xi; s) \Bigr) \nonumber \\
&\quad \times \int_{S^{d-1}} p(x - s\xi, \xi, \xi^\prime) | f^{(n)}(x - s\xi, \xi^\prime) | \,d\sigma_{\xi^\prime}ds \nonumber \\
\leq& \left( \sup_{(x, \xi) \in D} | f^{(n)} (x, \xi) | \right)\int_0^{\tau_-(x, \xi)} \mu_s(x - s\xi) \exp \Bigl(- M_t (x, \xi; s) \Bigr)\,ds \nonumber \\
\leq& M \left( \sup_{(x, \xi) \in D} | f^{(n)} (x, \xi) | \right) \label{ineq:E}
\end{align}
for all $(x, \xi) \in D$. This inequality implies that $f^{(n+1)}$ is defined and bounded on $D$. This completes the proof.
\end{proof}

\begin{proof}[Proof of Proposition \ref{prop:P2}]
From the inequality (\ref{ineq:E}), we have, for all $n \geq 0$, 
\begin{equation*}
\sup_{(x, \xi) \in D} | f^{(n)} (x, \xi) | \leq M \left( \sup_{(x, \xi) \in D} | f^{(n-1)} (x, \xi) | \right) \leq M^n \left( \sup_{(x, \xi) \in \Gamma_-} | f_0 (x, \xi) | \right).
\end{equation*}
Thus, 
\begin{equation*}
\sum_{n = 0}^\infty |f^{(n)} (x, \xi)| \leq \sum_{n = 0}^\infty M^n \left( \sup_{(x, \xi) \in \Gamma_-} | f_0 (x, \xi) | \right) = \frac{1}{1 - M} \left( \sup_{(x, \xi) \in \Gamma_-} | f_0 (x, \xi) | \right) < \infty,
\end{equation*}
which implies absolute and uniform convergence of the sum $\sum_{n = 0}^\infty f^{(n)}(x, \xi)$ on $D$.
\end{proof}

From Proposition \ref{prop:P1} and Proposition \ref{prop:P2}, the sum $f(x, \xi) = \sum_{n = 0}^\infty f^{(n)}(x, \xi)$ converges absolutely and uniformly on $D$ and satisfies
\begin{align*}
f(x, \xi) =& f^{(0)}(x, \xi) + \sum_{n = 0}^\infty f^{(n+1)}(x, \xi)\\
=& f^{(0)}(x, \xi) + \int_0^{\tau_-(x, \xi)} \mu_s(x - s\xi) \exp \Bigl(- M_t (x, \xi; s) \Bigr)\\
& \quad \times \int_{S^{d-1}} p(x - s\xi, \xi, \xi^\prime) \sum_{n = 0}^\infty f^{(n)}(x - s\xi, \xi^\prime)\,d\sigma_{\xi^\prime}ds\\
=& f^{(0)}(x, \xi) + \int_0^{\tau_-(x, \xi)} \mu_s(x - s\xi) \exp \Bigl(- M_t (x, \xi; s) \Bigr)\\
&\quad \times \int_{S^{d-1}} p(x - s\xi, \xi, \xi^\prime) f(x - s\xi, \xi^\prime)\,d\sigma_{\xi^\prime}ds
\end{align*}
for all $(x, \xi) \in D$, which is the integral equation (\ref{eq:IE}) itself. Thus, the sum is the unique solution to the integral equation (\ref{eq:IE}).

\section{Discontinuity of the solution}
We discuss discontinuity of the solution to the integral equation (\ref{eq:IE}). To this end, we decompose the solution $f$ into two parts as the following: 
\begin{equation*}
f(x, \xi) = F_0(x, \xi) + F_1(x, \xi),
\end{equation*}
where
\begin{equation*}
F_0(x, \xi) := f^{(0)}(x, \xi), \quad F_1(x, \xi) := \sum_{n = 1}^\infty f^{(n)}(x, \xi).
\end{equation*}
We observe discontinuity of $F_0$ and give a proof of continuity of $F_1$.

We prepare the following lemma in advance.

\begin{Lem} \label{lem:LI}
Let $g$ be a bounded function on $\mathbb{R}^d \times S^{d-1}$ such that it is continuous on $\Omega_0 \times S^{d-1}$ and $g(x, \xi) = 0$ for $(x, \xi) \in (\mathbb{R}^d \backslash \Omega_0) \times S^{d-1}$. Let $R$ be the diameter of the domain $\Omega$. Then, for all $s \in \lbrack 0, R \rbrack$, the integral
\begin{equation} \label{form:LI}
\int_0^s g(x - r\xi, \xi)\,dr
\end{equation}
is continuous at all $(x, \xi) \in \mathbb{R}^d \times S^{d-1}$.
\end{Lem}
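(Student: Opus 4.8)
The plan is to establish continuity sequentially and to move the limit inside the integral by dominated convergence. Fix $s \in [0, R]$, a target point $(x_0, \xi_0) \in \mathbb{R}^d \times S^{d-1}$, and an arbitrary sequence $(x_n, \xi_n) \to (x_0, \xi_0)$. I would bound the difference of the two integrals by $\int_0^s |g(x_n - r\xi_n, \xi_n) - g(x_0 - r\xi_0, \xi_0)|\,dr$. Since $g$ is bounded by some constant $G$, each integrand here is dominated by the constant $2G$, which is integrable on the finite interval $[0, s]$; hence the whole matter reduces to proving that the integrand tends to $0$ for almost every $r$.

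For the pointwise statement I would introduce the exceptional set $E := \{ r \in [0, s] \mid x_0 - r\xi_0 \in \partial \Omega_0 \}$ and show it is Lebesgue null. For $r \notin E$ the point $x_0 - r\xi_0$ lies either in the open set $\Omega_0$ or in the open set $\mathbb{R}^d \setminus \overline{\Omega_0}$. In the first case $(x_n - r\xi_n, \xi_n) \to (x_0 - r\xi_0, \xi_0) \in \Omega_0 \times S^{d-1}$ and, because $\Omega_0$ is open, the perturbed points lie in $\Omega_0 \times S^{d-1}$ for all large $n$; the assumed continuity of $g$ on $\Omega_0 \times S^{d-1}$ then gives convergence of the integrand. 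In the second case openness again forces $x_n - r\xi_n \notin \overline{\Omega_0}$ for large $n$, so both terms vanish. Thus the integrand converges for every $r \notin E$, and dominated convergence closes the argument once $E$ is shown to be null.

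The heart of the proof is therefore the nullity of $E$, and this is exactly where the generalized convexity condition is used. When the line $\{ x_0 - r\xi_0 \mid r \in \mathbb{R} \}$ meets $\Omega$, I would choose an interior point $y = x_0 - r_*\xi_0 \in \Omega$ and apply the condition to $(y, \xi_0)$ and to $(y, -\xi_0)$: each half-line issuing from $y$ meets $\partial\Omega_0$ only finitely often, so the whole line, and in particular the segment parametrized by $r \in [0, s]$, meets $\partial \Omega_0$ at finitely many points and $E$ is finite. The main obstacle I expect is the tangential case, in which the line misses the open set $\Omega$ but runs along $\partial \Omega \subseteq \partial \Omega_0$, so that $E$ could a priori be a whole subinterval; note that along such a line $g(x_0 - r\xi_0, \xi_0) \equiv 0$ while the values $g(x_n - r\xi_n, \xi_n)$ may approach a nonzero interior trace. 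Here I would exploit the convexity of $\Omega$ together with the $C^1$ regularity of $\partial \Omega$ — so that $\{ r : x_0 - r\xi_0 \in \overline{\Omega} \}$ is a convex subset of the line meeting the boundary in a controlled way — to rule out a positive-measure intersection, reducing to a single contact point (as happens automatically under strict convexity). Apart from this geometric point, the remaining steps are the routine boundedness bound and the standard dominated convergence theorem.
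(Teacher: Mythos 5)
Your overall strategy coincides with the paper's own proof of Lemma \ref{lem:LI}: the paper likewise fixes $(\bar{x}, \bar{\xi})$, observes that $g$ is continuous at every point of $(\mathbb{R}^d \setminus \partial \Omega_0) \times S^{d-1}$ (being continuous on the open set $\Omega_0 \times S^{d-1}$ and identically zero on the open complement of $\overline{\Omega_0}$), deduces pointwise convergence of the integrand except at the radii $r$ with $\bar{x} - r\bar{\xi} \in \partial \Omega_0$, invokes the generalized convexity condition to make that exceptional set finite, and concludes by Lebesgue's dominated convergence theorem using the uniform bound on $g$. Your two-sided application of the condition at an interior point $y$ of the line is in fact more scrupulous than the paper, which simply labels the exceptional radii $t_j(\bar{x}, \bar{\xi})$ even though these were defined only for $\bar{x} \in \Omega$.

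However, the step you yourself identify as the heart of the matter --- ruling out a positive-measure tangential intersection --- cannot be completed from the stated hypotheses, because the claim that convexity together with $C^1$ regularity forces a single contact point is false. A stadium domain (a rectangle capped by two half-discs) is convex with $C^1$ boundary and satisfies the generalized convexity condition with $N = 1$, $\Omega_0 = \Omega$, yet $\partial \Omega$ contains straight segments. Take $g = \chi_{\Omega_0}$, which is admissible (continuous on $\Omega_0 \times S^{d-1}$, vanishing outside), let $x_0$ lie on a flat edge and let $\xi_0$ point along it. Then $\int_0^s g(x_0 - r\xi_0, \xi_0)\,dr = 0$ because the whole segment stays on $\partial \Omega_0$, while for the interior points $x_n := x_0 - \frac{1}{n} n(x_0)$ one gets $\int_0^s g(x_n - r\xi_0, \xi_0)\,dr = s$ whenever $s$ is smaller than the length of the edge. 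So the integral (\ref{form:LI}) is genuinely discontinuous at $(x_0, \xi_0)$; the exceptional set $E$ has positive measure there and no argument can repair this --- the lemma as stated, with continuity claimed at \emph{all} $(x, \xi) \in \mathbb{R}^d \times S^{d-1}$, is false for such domains. This is not a defect of your proposal alone: the paper's proof silently skips exactly this case. The correct repair is not geometric but a restriction of the statement: continuity is needed, and is provable by precisely your argument, only at points where the backward ray either passes through $\Omega$ (so $E$ is finite by your two-sided application of the condition) or, as on $\Gamma_-$, leaves $\overline{\Omega}$ instantly (so $E \subseteq \{0\}$); this covers all of $D$, which is all that Corollary \ref{cor:LI} and the subsequent applications use. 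Alternatively, one may assume strict convexity of $\Omega$, under which your single-contact-point reduction does hold.
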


\begin{proof}
We fix $(\bar{x}, \bar{\xi}) \in \mathbb{R}^d \times S^{d-1}$ and take $s \in \lbrack 0, R \rbrack$. Since $g$ is continuous on $(\mathbb{R}^d \backslash \partial \Omega_0) \times S^{d-1}$, $g(x - r\xi, \xi)$ converges to $g(\bar{x} - r\bar{\xi}, \bar{\xi})$ as $(x, \xi)$ tends to $(\bar{x}, \bar{\xi})$ except for $r = t_j(\bar{x}, \bar{\xi})$, $j = 1, \ldots, l_s(\bar{x}, \bar{\xi})$, where
\begin{equation*}
l_s(\bar{x}, \bar{\xi}) := \max \{ j \in \{ 1, \ldots, l(\bar{x}, \bar{\xi}) \} | t_j(\bar{x}, \bar{\xi}) \leq s \}.
\end{equation*}
By the generalized convexity condition, $l_s(\bar{x}, \bar{\xi})$ is at most finite. Thus, we apply Lebesgue's convergence theorem to conclude that the integral (\ref{form:LI}) is continuous $(\bar{x}, \bar{\xi}) \in \mathbb{R}^d \times S^{d-1}$ for all $s \in \lbrack 0, R \rbrack$.
\end{proof}

\begin{Cor} \label{cor:LI}
Under the same assumption in  Lemma \ref{lem:LI}, the integral
\begin{equation*}
\int_0^{\tau_-(x, \xi)} g(x - r\xi, \xi)\,dr
\end{equation*}
is continuous at all $(x, \xi) \in D$.
\end{Cor}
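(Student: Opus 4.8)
The plan is to reduce the variable upper limit $\tau_-(x,\xi)$ to the fixed-limit situation already settled in Lemma \ref{lem:LI}, isolating the endpoint dependence in a separate term that is controlled by the boundedness of $g$. Fix a base point $(\bar x, \bar\xi) \in D$ and abbreviate $\bar\tau := \tau_-(\bar x, \bar\xi)$; note $0 \le \bar\tau \le R$, since the backward chord cannot exceed the diameter $R$ of $\Omega$. For $(x,\xi)$ near $(\bar x, \bar\xi)$ I would write
\begin{align*}
\int_0^{\tau_-(x,\xi)} g(x - r\xi, \xi)\,dr - \int_0^{\bar\tau} g(\bar x - r\bar\xi, \bar\xi)\,dr
=& \int_0^{\bar\tau} \bigl[\, g(x - r\xi, \xi) - g(\bar x - r\bar\xi, \bar\xi) \,\bigr]\,dr \\
&+ \int_{\bar\tau}^{\tau_-(x,\xi)} g(x - r\xi, \xi)\,dr.
\end{align*}
The first term tends to $0$ as $(x,\xi) \to (\bar x, \bar\xi)$: it is exactly the difference treated in Lemma \ref{lem:LI} with the fixed upper limit $s = \bar\tau \in \lbrack 0, R \rbrack$, whose continuity in $(x,\xi)$ was already proved. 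The second term is bounded by $\bigl( \sup_{\mathbb{R}^d \times S^{d-1}} |g| \bigr)\,|\tau_-(x,\xi) - \bar\tau|$, so it vanishes once $\tau_-$ is known to be continuous at $(\bar x, \bar\xi)$. Thus the entire statement rests on the continuity of $\tau_-$ on $D$, which I expect to be the main obstacle and the only place where convexity of $\Omega$ and the $C^1$ regularity of $\partial\Omega$ are used.

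For an interior base point $(\bar x, \bar\xi) \in \Omega \times S^{d-1}$ the key geometric fact is that the backward ray leaves $\Omega$ transversally. Writing $P := P(\bar x, \bar\xi) \in \partial \Omega$, the supporting hyperplane of the convex set $\Omega$ at $P$ places $\Omega$ in the half space $\{ y : (y - P) \cdot n(P) \le 0 \}$; since $\bar x - P = \bar\tau\,\bar\xi$ with $\bar\tau > 0$, this gives $\bar\xi \cdot n(P) \le 0$, and the inequality must be strict because $\bar x$ interior cannot lie on the supporting hyperplane. Transversality ($\bar\xi \cdot n(P) < 0$) together with the $C^1$ regularity of $\partial\Omega$ then lets me apply the implicit function theorem to the equation locating the exit point, yielding continuity (indeed local $C^1$ dependence) of $\tau_-$ near $(\bar x, \bar\xi)$.

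For a base point $(\bar x, \bar\xi) \in \Gamma_-$ we have $\bar x \in \partial \Omega$ and $n(\bar x) \cdot \bar\xi < 0$, so $\bar x - t\bar\xi \notin \Omega$ for all small $t > 0$ and hence $\tau_-(\bar x, \bar\xi) = 0$. Here I would give a quantitative estimate: using the strict transversality $n(\bar x) \cdot \bar\xi < 0$ and the $C^1$ boundary, the backward exit time of a nearby interior point $x$ is controlled by its distance to $\partial\Omega$ and the transversality angle, so $\tau_-(x,\xi) \to 0 = \bar\tau$ as $(x,\xi) \to (\bar x, \bar\xi)$ within $D$. Since $\Gamma_+$ and tangential directions $n(x)\cdot\xi = 0$ never occur as base points (they lie outside $D$), no further cases arise. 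Combining the two cases yields continuity of $\tau_-$ on $D$, and together with the displayed splitting this completes the proof.
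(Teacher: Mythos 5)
Your argument is correct, but it takes a genuinely different --- and much heavier --- route than the paper's. The paper disposes of the variable upper limit in one line: because $g$ vanishes on $(\mathbb{R}^d \backslash \Omega_0) \times S^{d-1}$ and, by convexity of $\Omega$, the point $x - r\xi$ lies outside $\Omega \supset \Omega_0$ for every $r > \tau_-(x, \xi)$, the integrand vanishes beyond the exit time, so
\begin{equation*}
\int_0^{\tau_-(x, \xi)} g(x - r\xi, \xi)\,dr = \int_0^R g(x - r\xi, \xi)\,dr, \quad (x, \xi) \in D,
\end{equation*}
and the right-hand side is continuous by Lemma \ref{lem:LI} applied with the single fixed limit $s = R$; continuity of $\tau_-$ is never invoked. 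You instead isolate the endpoint term, bound it by $\bigl( \sup |g| \bigr)\,|\tau_-(x, \xi) - \tau_-(\bar x, \bar\xi)|$, and then prove continuity of $\tau_-$ on $D$ from scratch: supporting-hyperplane transversality plus the implicit function theorem at interior base points (where you should also remark that convexity makes the backward exit time unique, so the root produced by the implicit function theorem really is $\tau_-$), and a quantitative transversality estimate at $\Gamma_-$ where $\tau_- = 0$. That auxiliary fact is true, and the paper itself needs it later (proof of Proposition \ref{prop:F0}), but there it is simply quoted from \cite{Guo} rather than proved. What your route buys is robustness: the splitting uses only boundedness of $g$, the fixed-limit continuity of Lemma \ref{lem:LI}, and continuity of $\tau_-$, so it would survive for integrands that do not vanish near $\partial \Omega$, where the paper's identity fails outright. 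What the paper's route buys is economy: the hypothesis $g = 0$ off $\Omega_0$, already built into Lemma \ref{lem:LI}, makes the corollary immediate, with no boundary geometry beyond convexity and no appeal to regularity of $\tau_-$.
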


\begin{proof}
Since  $g(x, \xi) = 0$ for $(x, \xi) \in (\mathbb{R}^d \backslash \Omega_0) \times S^{d-1}$, we have
\begin{equation*}
\int_0^{\tau_-(x, \xi)} g(x - r\xi, \xi)\,dr = \int_0^R g(x - r\xi, \xi)\,dr
\end{equation*}
for $(x, \xi) \in D$. The right hand side is continuous at all $(x, \xi) \in D$ by Lemma \ref{lem:LI}.
\end{proof}

Now we are ready to discuss discontinuity of the solution. First, we observe discontinuity of $F_0$.
\begin{Prop} \label{prop:F0}
\begin{equation*}
\disc(F_0) = \{(x_* + t\xi_*, \xi_*) | (x_*, \xi_*) \in \disc(f_0), 0 \leq t < \tau_+(x_*, \xi_*) \}.
\end{equation*}
\end{Prop}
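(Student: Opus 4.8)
The plan is to exploit the product structure of $F_0$. Writing $E(x,\xi) := \exp\bigl(-M_t(x,\xi;\tau_-(x,\xi))\bigr)$ and $\Phi(x,\xi) := (P(x,\xi),\xi)$, the definition $(\ref{eq:F0})$ reads $F_0 = E \cdot (f_0\circ\Phi)$. My first step is to observe that $E$ is continuous and strictly positive on $D$: since $M_t(x,\xi;\tau_-(x,\xi)) = \int_0^{\tau_-(x,\xi)}\mu_t(x-r\xi)\,dr$, Corollary \ref{cor:LI} applied with $g=\mu_t$ shows this integral is continuous on $D$, and composing with the exponential gives a continuous, everywhere-positive factor. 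Consequently $\disc(F_0) = \disc(f_0\circ\Phi)$, because multiplication by a continuous nowhere-vanishing function neither creates nor destroys discontinuities: one passes between $F_0$ and $f_0\circ\Phi$ by multiplying by $E$ or $1/E$, both continuous. Thus the whole proposition reduces to computing $\disc(f_0\circ\Phi)$.

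Second, I would record that $\Phi$ is continuous on all of $D$. For interior $(x,\xi)\in\Omega\times S^{d-1}$ this follows from convexity of $\Omega$ together with the $C^1$ regularity of $\partial\Omega$: the backward ray from an interior point meets $\partial\Omega$ transversally, since a supporting hyperplane at the exit point cannot contain the interior point $x$, so $\tau_-$ and hence $P$ vary continuously; at a strictly incoming boundary point $(x,\xi)\in\Gamma_-$ one has $\tau_-(x,\xi)=0$ and $P(x,\xi)=x$, and $\tau_-\to 0$, $P\to x$ as one approaches within $D$. Granting this, if $f_0$ is continuous at $\Phi(x,\xi)$ then $f_0\circ\Phi$ is continuous at $(x,\xi)$, which yields the inclusion $\disc(f_0\circ\Phi)\subseteq\Phi^{-1}(\disc(f_0))$. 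A direct geometric identification finishes the description of the target set: for $(x,\xi)\in D$ with $x_*:=P(x,\xi)$ one has $x = x_* + \tau_-(x,\xi)\xi$, so $\Phi(x,\xi)\in\disc(f_0)$ exactly when $\xi=\xi_*$, $(x_*,\xi_*)\in\disc(f_0)$, and $x = x_*+t\xi_*$ with $t=\tau_-(x,\xi)$; since $x\in\overline{\Omega}$ this $t$ ranges over $[0,\tau_+(x_*,\xi_*))$. Hence $\Phi^{-1}(\disc(f_0))$ is precisely the right-hand side claimed.

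The remaining and principal step is the reverse inclusion: every point $(\bar x,\bar\xi)=(x_*+\bar t\xi_*,\xi_*)$ with $(x_*,\xi_*)\in\disc(f_0)$ and $0\le \bar t<\tau_+(x_*,\xi_*)$ is genuinely a discontinuity of $F_0$. Here I would construct an explicit characteristic lift. Since $f_0$ is discontinuous at $(x_*,\xi_*)$, choose a sequence $(y_k,\eta_k)\to(x_*,\xi_*)$ in $\Gamma_-$ along which $f_0(y_k,\eta_k)$ does not converge to $f_0(x_*,\xi_*)$; by boundedness of $f_0$ I may pass to a subsequence with $f_0(y_k,\eta_k)\to L\ne f_0(x_*,\xi_*)$. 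Push these boundary points forward along their own characteristics by the fixed amount $\bar t$, setting $(x_k,\xi_k):=(y_k+\bar t\,\eta_k,\eta_k)$. For large $k$ one has $\bar t<\tau_+(y_k,\eta_k)$, so $x_k\in\Omega$, and by convexity the backward exit gives $\tau_-(x_k,\xi_k)=\bar t$ and $P(x_k,\xi_k)=y_k$, i.e.\ $\Phi(x_k,\xi_k)=(y_k,\eta_k)$; moreover $(x_k,\xi_k)\to(\bar x,\bar\xi)$. Therefore
\[
F_0(x_k,\xi_k) = E(x_k,\xi_k)\,f_0(y_k,\eta_k) \longrightarrow E(\bar x,\bar\xi)\,L \ne E(\bar x,\bar\xi)\,f_0(x_*,\xi_*) = F_0(\bar x,\bar\xi),
\]
using continuity of $E$ and $E(\bar x,\bar\xi)>0$, which shows $F_0$ is discontinuous at $(\bar x,\bar\xi)$.

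I expect the main obstacle to be exactly this jump-preservation: one must check both that the lifted points really satisfy $\Phi(x_k,\xi_k)=(y_k,\eta_k)$, a convexity/transversality fact about the backward exit time $\tau_-$, and that the strictly positive, continuous attenuation factor $E$ cannot cancel the limiting gap $L-f_0(x_*,\xi_*)$. The continuity of $\tau_-$ and $P$ on $D$ needed for the easy inclusion rests on the same convexity and $C^1$ hypotheses and should be isolated as a lemma if it is not already available earlier.
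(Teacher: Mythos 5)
Your proposal is correct and takes essentially the same route as the paper's proof: both factor $F_0$ into the continuous, strictly positive attenuation factor (continuity via Corollary \ref{cor:LI} with $g = \mu_t$ and continuity of $\tau_-$) times $f_0 \circ \Phi$ with $\Phi(x,\xi) = (P(x,\xi),\xi)$, reduce the claim to $\disc(F_0) = \Phi^{-1}(\disc(f_0))$, and parametrize that set by $x = x_* + \tau_-(x,\xi)\xi$ with $0 \leq \tau_-(x,\xi) < \tau_+(x_*, \xi_*)$. The only difference is one of detail: your explicit lifted sequence $(y_k + \bar t\,\eta_k, \eta_k)$ supplies the reverse inclusion that the paper compresses into its unelaborated equivalence $(x,\xi) \in \disc(F_0) \Leftrightarrow (P(x,\xi),\xi) \in \disc(f_0)$, which is a filling-in of the same argument rather than a different method.
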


\begin{proof}
Let us recall the explicit formula of $F_0$ (\ref{eq:F0}): for all $(x, \xi) \in D$,
\begin{equation*}
F_0(x, \xi) = \exp \Bigl( -M_t(x, \xi; \tau_-(x, \xi))\,dr \Bigr) f_0(P(x, \xi), \xi),
\end{equation*}
where $P(x, \xi) = x - \tau_-(x, \xi)\xi$. Since $\tau_-$ is continuous on $D$ (see \cite{Guo}), Corollary \ref{cor:LI} with $g(x, \xi) = \mu_t(x)$ guarantees that $\exp \Bigl( -M_t(x, \xi; \tau_-(x, \xi)) \Bigr)$ is continuous on $(x, \xi) \in D$. Thus, we have  
\begin{equation*}
(x, \xi) \in \disc(F_0) \Leftrightarrow (x - \tau_-(x, \xi)\xi, \xi) \in \disc(f_0).
\end{equation*}
Let $x_* = x - \tau_-(x, \xi)\xi$. Then, we have $x = x_* + \tau_-(x, \xi)\xi$ and $0 \leq \tau_-(x, \xi) < \tau_+(x_*, \xi)$, which completes the proof.
\end{proof}

Second, we prove continuity of $F_1$. For this, it suffices to prove that functions $f^{(n)}$, defined by (\ref{eq:F0})-(\ref{eq:F1}), are bounded continuous on $D$ for all $n \geq 1$ since we already know from Proposition \ref{prop:P2} that the sum $\sum_{n=1}^\infty f^{(n)}(x, \xi)$ converges uniformly on $D$. In what follows, we discuss continuity of each $f^{(n)}$.

We prepare the following lemma.

\begin{Lem} \label{lem:ELI}
Let $h$ be a bounded function on $\mathbb{R}^d \times S^{d-1}$ such that it is continuous on $\Omega_0 \times S^{d-1}$ and $h(x, \xi) = 0$ for $(x, \xi) \in (\mathbb{R}^d \backslash \Omega_0) \times S^{d-1}$. Let $R$ be the diameter of the domain $\Omega$. Then, the integral
\begin{equation*}
\int_0^R \exp \Bigl( -M_t(x, \xi; s) \Bigr) h(x - s\xi, \xi, \xi^\prime)\,ds
\end{equation*}
is continuous at all $(x, \xi) \in \mathbb{R}^d \times S^{d-1}$.
\end{Lem}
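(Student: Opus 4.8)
The plan is to prove continuity by Lebesgue's dominated convergence theorem, following the same pattern as the proof of Lemma \ref{lem:LI}, the only genuinely new feature being the presence of the bounded weight $\exp\bigl(-M_t(x,\xi;s)\bigr)$. Fix a point $(\bar{x}, \bar{\xi}) \in \mathbb{R}^d \times S^{d-1}$ and let $(x_k, \xi_k)$ be an arbitrary sequence converging to $(\bar{x}, \bar{\xi})$. Writing
\[
I(x,\xi) := \int_0^R \exp\bigl(-M_t(x,\xi;s)\bigr) h(x - s\xi, \xi)\,ds,
\]
it suffices to show $I(x_k, \xi_k) \to I(\bar{x}, \bar{\xi})$, since the sequence is arbitrary.

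First I would establish pointwise convergence of the integrand for almost every $s \in [0, R]$, treating the two factors separately. For the exponential weight, note $M_t(x,\xi;s) = \int_0^s \mu_t(x - r\xi)\,dr$; since $\mu_t$ meets the hypotheses of Lemma \ref{lem:LI} (bounded, continuous on $\Omega_0 \times S^{d-1}$, vanishing outside $\Omega_0$), that lemma gives continuity of $M_t(\cdot,\cdot;s)$ for every fixed $s \in [0,R]$, whence $\exp\bigl(-M_t(x_k,\xi_k;s)\bigr) \to \exp\bigl(-M_t(\bar{x},\bar{\xi};s)\bigr)$ for \emph{every} $s$. For the factor $h(x - s\xi, \xi)$, the hypotheses make $h$ continuous at each point of $(\mathbb{R}^d \setminus \partial\Omega_0) \times S^{d-1}$, so $h(x_k - s\xi_k, \xi_k) \to h(\bar{x} - s\bar{\xi}, \bar{\xi})$ for all $s$ except those with $\bar{x} - s\bar{\xi} \in \partial\Omega_0$, i.e.\ except $s = t_j(\bar{x}, \bar{\xi})$. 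By the generalized convexity condition these crossings are finite in number (they all lie in the bounded set $\overline{\Omega_0}$, so the claim holds even when $\bar{x} \notin \Omega$), hence form a set of Lebesgue measure zero. Consequently the product converges for almost every $s \in [0,R]$.

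Next I would supply a dominating function: since $\mu_t \geq 0$ we have $0 < \exp\bigl(-M_t(x,\xi;s)\bigr) \leq 1$, so the integrand is bounded in absolute value by $\sup_{\mathbb{R}^d \times S^{d-1}} |h|$, a constant that is integrable on the finite interval $[0,R]$. Dominated convergence then yields $I(x_k,\xi_k) \to I(\bar{x},\bar{\xi})$, proving continuity at $(\bar{x},\bar{\xi})$. The only delicate point, exactly as in Lemma \ref{lem:LI}, is controlling the exceptional set $\{ s : \bar{x} - s\bar{\xi} \in \partial\Omega_0 \}$ on which the integrand may fail to converge; the generalized convexity condition is precisely what forces this set to be finite and therefore negligible. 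The exponential weight itself introduces no obstacle, since it is uniformly bounded by $1$ and, by Lemma \ref{lem:LI} applied to $\mu_t$, converges pointwise for every $s$.
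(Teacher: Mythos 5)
Your proposal is correct and takes essentially the same approach as the paper: the paper's entire proof of Lemma \ref{lem:ELI} is the single line that it can be proved ``in the same way as Lemma \ref{lem:LI}'', and you carry that out faithfully --- dominated convergence with the constant majorant $\sup |h|$ (the factor $\exp\bigl(-M_t(x,\xi;s)\bigr)$ being bounded by $1$), pointwise convergence of the exponential weight for every $s$ via Lemma \ref{lem:LI} applied to $\mu_t$, and an exceptional set consisting of the finitely many crossing times $s = t_j(\bar{x},\bar{\xi})$, which has measure zero. Your parenthetical argument for finiteness of the crossing set when $\bar{x} \notin \Omega$ (boundedness of $\overline{\Omega_0}$ alone does not imply finitely many crossings) is the one loose step, but the paper's own proof of Lemma \ref{lem:LI} silently uses $t_j(\bar{x},\bar{\xi})$ and $l(\bar{x},\bar{\xi})$, which are defined only for $\bar{x} \in \Omega$, at arbitrary $(\bar{x},\bar{\xi}) \in \mathbb{R}^d \times S^{d-1}$, so on this point you are no less rigorous than the paper itself.
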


\begin{proof}
We can prove this lemma in the same way as Lemma \ref{lem:LI}.
\end{proof}

\begin{Lem} \label{lem:f1}
Under the assumption in Theorem \ref{thm:MR1}, $f^{(1)}$ is bounded continuous on $D$.
\end{Lem}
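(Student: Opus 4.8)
The plan is to reduce the continuity of $f^{(1)}$ to the continuity of the inner angular average
\[
g(y,\xi) := \int_{S^{d-1}} p(y,\xi,\xi')\, f^{(0)}(y,\xi')\,d\sigma_{\xi'},
\]
and then feed the result into Lemma \ref{lem:ELI}. Boundedness of $f^{(1)}$ is already supplied by Proposition \ref{prop:P1}, so only continuity remains. First I would note that, since $\mu_s(y)=0$ and $p(y,\cdot,\cdot)=0$ for $y\notin\Omega_0$, the upper limit $\tau_-(x,\xi)$ in the definition (\ref{eq:F1}) may be replaced by the diameter $R$, giving
\[
f^{(1)}(x,\xi) = \int_0^R \exp\bigl(-M_t(x,\xi;s)\bigr)\, h(x-s\xi,\xi)\,ds, \qquad h(y,\xi) := \mu_s(y)\,g(y,\xi).
\]
The function $h$ is bounded, vanishes outside $\Omega_0\times S^{d-1}$, and---granting the continuity of $g$---is continuous on $\Omega_0\times S^{d-1}$ because $\mu_s$ is. Hence Lemma \ref{lem:ELI} applies directly and yields continuity of $f^{(1)}$ at every $(x,\xi)\in D$. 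Thus the entire problem is concentrated in showing that $g$ is continuous at each $(\bar y,\bar\xi)$ with $\bar y\in\Omega_0$.

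For the continuity of $g$ I would fix such a point, take $(y,\xi)\to(\bar y,\bar\xi)$, and argue by dominated convergence on the sphere. Since $\bar y\in\Omega_0$, the kernel $p(\cdot,\cdot,\xi')$ is continuous at $(\bar y,\bar\xi)$ for every $\xi'$, and $f^{(0)}$ is uniformly bounded by Proposition \ref{prop:P1}, so a dominating function is immediate. The only delicate point is the almost-everywhere convergence of the integrand in $\xi'$, which reduces to continuity of the map $y\mapsto f^{(0)}(y,\xi')$ at $\bar y$ for almost every fixed $\xi'$. Writing $f^{(0)}(y,\xi') = \exp\bigl(-M_t(y,\xi';\tau_-(y,\xi'))\bigr)\, f_0(P(y,\xi'),\xi')$ and recalling (Corollary \ref{cor:LI}, together with continuity of $\tau_-$ and of $P$) that the exponential prefactor and the exit point $P(\cdot,\xi')$ depend continuously on $y$, this in turn reduces to continuity of $f_0(\cdot,\xi')$ at $P(\bar y,\xi')$ for almost every $\xi'$.

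Here the two hypotheses of Theorem \ref{thm:MR1} are treated separately. Under condition 2, $f_0(\cdot,\xi')$ is continuous on $\Gamma_{-,\xi'}$ for almost every $\xi'$, which is exactly the required statement, so the exceptional set of directions is null and dominated convergence gives $g(y,\xi)\to g(\bar y,\bar\xi)$ directly. Under condition 1 the hypothesis instead controls continuity in the direction variable, so I would first change variables in the sphere integral from $\xi'$ to the exit point $z=P(y,\xi')\in\partial\Omega$, with $\xi'=(y-z)/|y-z|$; because $\bar y$ is interior to $\Omega$ the distance $|y-z|$ is bounded away from $0$ and the Jacobian (a continuous expression in $n(z)\cdot\xi'$ and $|y-z|$) stays continuous and bounded near $\bar y$. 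After this substitution the integrand is, for almost every fixed $z$, continuous in $y$ as soon as $f_0(z,\cdot)$ is continuous at the limiting direction $(\bar y-z)/|\bar y-z|$, which holds for almost every $z$ by condition 1; dominated convergence on $\partial\Omega$ then again yields continuity of $g$.

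The main obstacle I anticipate is the condition-1 case: setting up the $\xi'\leftrightarrow z$ change of variables cleanly, verifying that the Jacobian remains continuous and bounded on a neighborhood of $\bar y$ (using $\bar y\in\Omega_0\subset\Omega$, so that no grazing directions with $|y-z|\to 0$ intervene), and checking that the null exceptional set $E\subset\partial\Omega$ coming from condition 1 pulls back to a negligible set of directions. The condition-2 case, the boundedness claim, and the final assembly through Lemma \ref{lem:ELI} should all be routine once the angular average $g$ has been shown to be continuous.
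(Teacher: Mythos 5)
Your proposal follows essentially the same route as the paper: your angular average $g$ is exactly the paper's auxiliary function $G$ of Lemma \ref{lem:G}, the reduction of $f^{(1)}$ through the zero extension and Lemma \ref{lem:ELI} with $h(x,\xi)=\mu_s(x)\widetilde{G}(x,\xi)$ is identical, and your treatment of the two cases (dominated convergence on $S^{d-1}$ under condition 2; the change of variables $\xi' \mapsto z = P(y,\xi')$ with Jacobian $|n(z)\cdot(y-z)|/|y-z|^d$ under condition 1) mirrors the paper's proof, where the Jacobian you flag as the main obstacle is precisely what the paper establishes in Lemma \ref{lem:J}. The argument is correct as outlined; only that Jacobian computation remains to be carried out in detail.
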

\begin{proof}
Boundedness of $f^{(1)}$ was already proved in section 2, and we here prove its continuity. From the explicit formula of $f^{(0)}$ (\ref{eq:F0}), we have
\begin{align*}
f^{(1)}(x, \xi) =& \int_0^{\tau_-(x, \xi)} \mu_s(x - s\xi) \exp \Bigl(- M_t (x, \xi; s) \Bigr)\\
&\quad \times \int_{S^{d-1}} p(x - s\xi, \xi, \xi^\prime) f^{(0)}(x - s\xi, \xi^\prime)\,d\sigma_{\xi^\prime}ds\\
=& \int_0^{\tau_-(x, \xi)} \mu_s(x - s\xi) \exp \Bigl(- M_t (x, \xi; s) \Bigr) G(x - s\xi, \xi)\,ds,
\end{align*}
where
\begin{equation} \label{eq:G}
G(x, \xi) := \int_{S^{d-1}} p(x, \xi, \xi^\prime) \exp \Bigl(- M_t (x, \xi^\prime; \tau_- (x, \xi^\prime)) \Bigr) f_0(P(x, \xi^\prime), \xi^\prime) \,d\sigma_{\xi^\prime}.
\end{equation}
We note that $G$ is defined only on $D$, and we have the following lemma, whose proof will be shown later.

\begin{Lem} \label{lem:G}
Under the assumption in Theorem \ref{thm:MR1}, $G$ is bounded continuous on $\Omega_0 \times S^{d-1}$.
\end{Lem}

Admitting Lemma \ref{lem:G}, we continue to prove Lemma \ref{lem:f1}. Let $\widetilde{G}$ be the zero extension of $G$ to $\mathbb{R}^d \times S^{d-1}$;
\begin{equation*}
\widetilde{G}(x, \xi) :=
\begin{cases}
G(x, \xi), &(x, \xi) \in \Omega_0 \times S^{d-1},\\
0, \quad &otherwise.
\end{cases}
\end{equation*}
Then, $f^{(1)}$ can be written as the following:
\begin{equation*}
f^{(1)}(x, \xi) = \int_0^R \mu_s(x - s\xi) \exp \Bigl(- M_t(x, \xi; s) \Bigr) \widetilde{G}(x - s\xi, \xi)\,ds.
\end{equation*}
Then, the conclusion immediately follows from Lemma \ref{lem:ELI} with $h(x, \xi) = \mu_s(x) \widetilde{G}(x, \xi)$.
\end{proof}

\begin{proof}[Proof of Lemma \ref{lem:G}]
It is obvious that $G$ is bounded because the integrand is also bounded, so we only discuss continuity of $G$. At first we fix a point $(\overline{x}, \overline{\xi}) \in \Omega_0 \times S^{d-1}$, and the we prove continuity of $G$ at the point $(\overline{x}, \overline{\xi})$.

First, we consider the case where $f_0$ satisfies the condition 1 in Theorem \ref{thm:MR1}. In this case, we change a domain of integration appeared in $G$ from $S^{d-1}$ to $\partial \Omega$. 

We investigate a relation between the Lebesgue measure $d\sigma_{\xi^\prime}$ on the unit sphere $S^{d-1}$ and that $d\sigma_y$ on the boundary $\partial \Omega$. Let us introduce the atlas $\{ (U_\lambda, \varphi_\lambda) \}_{\lambda \in \Lambda}$ of $\partial \Omega$. Since $\partial \Omega$ is compact, we can choose a finite number of covering $\{ U_i \}_{i = 1}^N$ from the system of local neighborhoods $\{ U_\lambda \}_{\lambda \in \Lambda}$, that is, $\cup_{i = 1}^N U_i = \partial \Omega$. Also, for $x \in \Omega$, we define a map $P_x : S^{d-1} \rightarrow \partial \Omega$ by
\begin{equation*}
P_x(\xi) := P(x, \xi), \quad \xi \in S^{d-1}.
\end{equation*}
In fact, since $\partial \Omega$ is bounded convex, the map $P_x$ is well-defined for all $x \in \Omega$ and the inverse map $P_x^{-1} : \partial \Omega \rightarrow S^{d-1}$ is written by
\begin{equation} \label{eq:proj}
P_x^{-1}(y) = \frac{x - y}{|x - y|}, \quad y \in \partial \Omega.
\end{equation}
Moreover, since $\partial \Omega$ is $C^1$, the map $P_x$ is a diffeomorphism for all $x \in \Omega$. Thus, we introduce the atlas $\{ (P_x^{-1}(U_i), \varphi_i \circ P_x) \}_{i=1}^N$ on $S^{d-1}$. Then, the following relation holds.

\begin{Lem} \label{lem:J}
For all $i = 1, \ldots, N$,
\begin{equation*}
d\sigma_{\xi^\prime} = \dfrac{|n(y) \cdot (x - y)|}{|x - y|^d}\,d\sigma_y
\end{equation*}
via $y = P_x(\xi^\prime)$, $\xi^\prime \in P_x^{-1}(U_i)$.
\end{Lem}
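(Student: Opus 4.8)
The plan is to prove the identity as a change-of-variables formula for the diffeomorphism $\Phi := P_x^{-1} : \partial\Omega \to S^{d-1}$. By (\ref{eq:proj}) this map has the explicit form $\Phi(y) = (x-y)/|x-y|$, and since $P_x$ (hence $\Phi$) is a $C^1$ diffeomorphism on each chart, the claim $d\sigma_{\xi^\prime} = |\det D\Phi(y)|\,d\sigma_y$ via $y = P_x(\xi^\prime)$ reduces to the pointwise computation of the Jacobian factor $|\det D\Phi(y)|$, understood as the density of $\Phi^\ast(d\sigma_{\xi^\prime})$ with respect to the surface measure $d\sigma_y$.

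First I would differentiate $\Phi$ along a tangent vector $v \in T_y\partial\Omega$. Writing $u := x - y$ and $r := |u| = |x-y|$, the quotient rule gives
\[
D\Phi(y)v = \left.\frac{d}{dt}\right|_{t=0}\frac{x-(y+tv)}{|x-(y+tv)|} = -\frac{1}{r}\left( v - (\xi^\prime \cdot v)\,\xi^\prime \right),
\]
where $\xi^\prime = u/r = \Phi(y)$. Thus $D\Phi(y)$ is the composition of the orthogonal projection $\pi_{\xi^\prime}$ of $\mathbb{R}^d$ onto the hyperplane $(\xi^\prime)^\perp = T_{\xi^\prime}S^{d-1}$, restricted to the $(d-1)$-dimensional space $T_y\partial\Omega$, followed by scaling by $-1/r$. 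The scaling contributes a factor $r^{-(d-1)}$ to the determinant, so the problem is reduced to computing the Jacobian of the orthogonal projection $\pi_{\xi^\prime}|_{T_y\partial\Omega} : T_y\partial\Omega \to (\xi^\prime)^\perp$ between two hyperplanes.

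For this projection I would fix an orthonormal basis $\{e_1,\dots,e_{d-1}\}$ of $T_y\partial\Omega$ and evaluate the Gram determinant of the images $\pi_{\xi^\prime}e_i = e_i - (\xi^\prime\cdot e_i)\xi^\prime$. Setting $a_i := \xi^\prime\cdot e_i$, the Gram matrix is $\delta_{ij} - a_i a_j$, whose determinant equals $1 - |a|^2$. Completing $\{e_i\}$ to an orthonormal basis of $\mathbb{R}^d$ by the unit normal $n(y)$ yields $|a|^2 = \sum_i(\xi^\prime\cdot e_i)^2 = 1 - (n(y)\cdot\xi^\prime)^2$, so the Jacobian of the projection is $\sqrt{1-|a|^2} = |n(y)\cdot\xi^\prime|$. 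This hyperplane-projection determinant is the only step that is not entirely mechanical, and I expect it to be the main point requiring care.

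Finally I would assemble the factors to get $|\det D\Phi(y)| = r^{-(d-1)}\,|n(y)\cdot\xi^\prime|$, and substitute $\xi^\prime = (x-y)/|x-y|$ and $r = |x-y|$ to obtain
\[
|\det D\Phi(y)| = \frac{|n(y)\cdot(x-y)|}{|x-y|^{d}},
\]
which is precisely the density relating $d\sigma_{\xi^\prime}$ to $d\sigma_y$ under $y = P_x(\xi^\prime)$, as claimed.
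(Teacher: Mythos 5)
Your proof is correct, and it takes a genuinely different route from the paper's. The paper establishes the lemma by explicit chart computations carried out separately in each dimension: for $d=2$ it parametrizes a boundary chart by a single variable $t$, expands $\bigl(\partial \xi'_1/\partial t\bigr)^2 + \bigl(\partial \xi'_2/\partial t\bigr)^2$ by the chain rule using the derivatives $\partial \xi'_i/\partial y_j = -\delta_{i,j}/|x-y| + (x_i-y_i)(x_j-y_j)/|x-y|^3$, and identifies the resulting combination $\bigl|(x_2-y_2)\,y_1' - (x_1-y_1)\,y_2'\bigr|$ with $|n(y)\cdot(x-y)|$ times the arc-length factor; for $d=3$ it computes the cross products $\partial \xi'/\partial p_i \times \partial \xi'/\partial q_i$ and reduces them to $\bigl(\partial y/\partial p_i \times \partial y/\partial q_i\bigr)\cdot(x-y)$, again reading off the normal. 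You instead work coordinate-free: you differentiate $\Phi = P_x^{-1}$ to get $D\Phi(y)v = -r^{-1}\bigl(v - (\xi'\cdot v)\xi'\bigr)$, recognize this as a scaling by $r^{-1}$ composed with the orthogonal projection $T_y\partial\Omega \to (\xi')^\perp$, and evaluate the projection's Jacobian via the Gram determinant $\det(\delta_{ij} - a_i a_j) = 1 - |a|^2 = (n(y)\cdot\xi')^2$, which assembles to $r^{-(d-1)}|n(y)\cdot\xi'| = |n(y)\cdot(x-y)|/|x-y|^d$ in one stroke. What your approach buys is uniformity and brevity: a single argument valid for every $d \ge 2$ (the paper treats only $d=2,3$, which is all it needs) that isolates the geometric content --- the cosine factor from tilting between the two hyperplanes and the radial scaling --- at the modest cost of invoking the general area formula for diffeomorphisms of hypersurfaces and the rank-one identity $\det(I - aa^{\mathsf T}) = 1 - |a|^2$, machinery the paper's hands-on computation avoids. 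One cosmetic caveat: $\det D\Phi(y)$ for a map between two distinct hyperplanes is an abuse of notation, but you correctly gloss it as the density $\sqrt{\det(D\Phi^{\mathsf T} D\Phi)}$ of the pulled-back surface measure, and your Gram-matrix computation is precisely that quantity; its nondegeneracy ($n(y)\cdot\xi' \neq 0$ for $x \in \Omega$, by convexity of $\Omega$) is consistent with the paper's standing observation that $P_x$ is a diffeomorphism.
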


We are ready to change the domain of integration from $S^{d-1}$ to $\partial \Omega$. Let $\{ \rho_i \}_{i = 1}^N$ be the partition of unity on $S^{d-1}$ corresponding to $\{ P_x^{-1} (U_i) \}_{i  = 1}^N$, that is, $0 \leq \rho_i \leq 1$ on $S^{d-1}$ for all $i$, supp$\rho_i$ $\subset P_x^{-1} ( U_i )$ for all $i$, and $\sum_{i=1}^N \rho_i(\xi) = 1$ for all $\xi \in S^{d-1}$. Then, we have
\begin{equation*}
G(x, \xi) = \sum_{i=1}^N \int_{P_x^{-1}(U_i)} p(x, \xi, \xi^\prime) \exp \Bigl(- M_t (x, \xi^\prime; \tau_- (x, \xi^\prime)) \Bigl) f_0(P_x(\xi^\prime), \xi^\prime) \rho_i(\xi^\prime)\,d\sigma_{\xi^\prime}.
\end{equation*}
From Lemma \ref{lem:J}, we have
\begin{align*}
G(x, \xi) =& \sum_{i=1}^N \int_{U_i} p \left(x, \xi, \frac{x - y}{|x - y|} \right) \exp \left(- M_t \left(x, \frac{x - y}{|x - y|}; |x - y|\right) \right)\\
&\quad \times f_0 \left(y, \frac{x - y}{|x - y|} \right) \rho_i \left(\frac{x - y}{|x - y|} \right) \dfrac{ |n(y) \cdot (x - y) |} {|x - y|^d} \,d\sigma_y\\
=& \int_{\partial \Omega} p \left(x, \xi, \frac{x - y}{|x - y|} \right) \exp \left(- M_t \left(x, \frac{x - y}{|x - y|}; |x - y|\right) \right) f_0 \left(y, \frac{x - y}{|x - y|} \right) \dfrac{ |n(y) \cdot (x - y) |} {|x - y|^d} \,d\sigma_y.
\end{align*}
    
We prove continuity of $G$ at $(\overline{x}, \overline{\xi}) \in \Omega_0 \times S^{d-1}$. Let $\epsilon := d(\overline{x}, \partial \Omega_0)$. Then, the integrand
\begin{equation*}
p \left(x, \xi, \frac{x - y}{|x - y|} \right) \exp \left(- M_t \left(x, \frac{x - y}{|x - y|}; |x - y|\right) \right) f_0 \left(y, \frac{x - y}{|x - y|} \right) \dfrac{\left| n(y) \cdot (x - y) \right|}{|x - y|^d}
\end{equation*}
is bounded continuous on $B_{\epsilon / 2}(\overline{x}) \times S^{d-1}$ for almost all $y \in \partial \Omega$, where $B_{\epsilon / 2}(\overline{x})$ is the open ball in $\mathbb{R}^d$ centered at $\overline{x}$ with radius $\epsilon / 2$. Thus, we apply the dominated convergence theorem to conclude that $G$ is continuous at $(\overline{x}, \overline{\xi}) \in \Omega_0 \times S^{d-1}$. 

Second, we consider the case where the boundary data $f_0$ satisfies the condition 2 in Theorem \ref{thm:MR1}, which is done by Anikonov et al. \cite{1993Anik}. Because of convexity of the domain $\Omega$ and smoothness of the boundary $\partial \Omega$, $\tau_-$ is continuous on $D$ (see \cite{Guo}). Thus, for almost all $\xi^\prime \in S^{d-1}$, the integrand 
\begin{equation*}
p(x, \xi, \xi^\prime) \exp \Bigl(- M_t (x, \xi^\prime; \tau_- (x, \xi^\prime)) \Bigr) f_0(P(x, \xi^\prime), \xi^\prime)
\end{equation*}
is continuous at $(\overline{x}, \overline{\xi}) \in \Omega_0 \times S^{d-1}$. Furthermore, the integrand is bounded by 
\begin{equation*}
\left( \sup_{(x, \xi, \xi^\prime) \in \Omega_0 \times S^{d-1} \times S^{d-1}} p(x, \xi, \xi^\prime) \right) \left( \sup_{(x, \xi) \in \Gamma_-} |f_0(x, \xi)| \right),
\end{equation*}
which is obviously integrable with respect to $\xi^\prime$. Therefore, we apply the dominated convergence theorem to conclude that $G$ is bounded continuous on $\Omega_0 \times S^{d-1}$.

We finish the proof of Lemma \ref{lem:G}.
\end{proof}

\begin{proof}[Proof of Lemma \ref{lem:J}]
A proof depends on its dimension $d$. 

We consider the two dimensional case $d=2$. In this case, $U_i$ is parametrized by an interval $(a_i, b_i)$ via $\varphi_i(U_i) = (a_i, b_i)$. From this parametrization, the Lebesgue measure $d\sigma_{\xi^\prime}$ on $S^1$ is given by
\begin{equation*}
d\sigma_{\xi^\prime} = \sqrt{ \left( \dfrac{\partial \xi^\prime_1}{\partial t} \right)^2 + \left( \dfrac{\partial \xi^\prime_2}{\partial t} \right)^2 }\,dt, \quad t \in (a_i, b_i).
\end{equation*}
By the chain rule, we have
\begin{equation*}
\dfrac{\partial \xi^\prime_1}{\partial t} = \dfrac{\partial \xi^\prime_1}{\partial y_1} \dfrac{dy_1}{dt} + \dfrac{\partial \xi^\prime_1}{\partial y_2} \dfrac{dy_2}{dt}, \quad \dfrac{\partial \xi^\prime_2}{\partial t} = \dfrac{\partial \xi^\prime_2}{\partial y_1} \dfrac{dy_1}{dt} + \dfrac{\partial \xi^\prime_2}{\partial y_2} \dfrac{dy_2}{dt}.
\end{equation*}
Then, we have 
\begin{align*}
\left( \dfrac{\partial \xi^\prime_1}{\partial t} \right)^2 + \left( \dfrac{\partial \xi^\prime_2}{\partial t} \right)^2 =& \left\{ \left( \dfrac{\partial \xi^\prime_1}{\partial y_1} \right)^2 + \left( \dfrac{\partial \xi^\prime_2}{\partial y_1} \right)^2 \right\} \left( \dfrac{dy_1}{dt} \right)^2 + \left\{ \left( \dfrac{\partial \xi^\prime_1}{\partial y_2} \right)^2 + \left( \dfrac{\partial \xi^\prime_2}{\partial y_2} \right)^2 \right\} \left( \dfrac{dy_2}{dt} \right)^2\\
& + 2 \left\{ \left( \dfrac{\partial \xi^\prime_1}{\partial y_1} \right) \left( \dfrac{\partial \xi^\prime_1}{\partial y_2} \right) + \left( \dfrac{\partial \xi^\prime_2}{\partial y_1} \right) \left( \dfrac{\partial \xi^\prime_2}{\partial y_2} \right) \right\} \left( \dfrac{dy_1}{dt} \right) \left( \dfrac{dy_2}{dt} \right).
\end{align*}
Since 
\begin{equation*}
\dfrac{\partial \xi^\prime_i}{\partial y_j} = - \dfrac{\delta_{i, j}}{|x - y|} + \dfrac{(x_i - y_i)(x_j - y_j)}{|x - y|^3},
\end{equation*}
where $\delta_{i, j}$ is the Kronecker's delta, for $i, j = 1, 2$, we have
\begin{align*}
\left( \dfrac{\partial \xi^\prime_1}{\partial y_1} \right)^2 + \left( \dfrac{\partial \xi^\prime_2}{\partial y_1} \right)^2 =& \left( -\dfrac{1}{|x - y|} + \dfrac{(x_1 - y_1)^2}{|x - y|^3} \right)^2 + \left( \dfrac{(x_1 - y_1)(x_2 - y_2)}{|x - y|^3} \right)^2\\
=& \dfrac{(x_2 - y_2)^4}{|x - y|^6} + \dfrac{(x_1 - y_1)^2 (x_2 - y_2)^2}{|x - y|^6} = \dfrac{(x_2 - y_2)^2}{|x - y|^4},
\end{align*}
\begin{align*}
\left( \dfrac{\partial \xi^\prime_1}{\partial y_2} \right)^2 + \left( \dfrac{\partial \xi^\prime_2}{\partial y_2} \right)^2 =& \left( \dfrac{(x_2 - y_2)(x_1 - y_1)}{|x - y|^3} \right)^2 + \left( -\dfrac{1}{|x - y|} + \dfrac{(x_2 - y_2)^2}{|x - y|^3} \right)^2\\
=& \dfrac{(x_1 - y_1)^2 (x_2 - y_2)^2}{|x - y|^6}  + \dfrac{(x_1 - y_1)^4}{|x - y|^6} = \dfrac{(x_1 - y_1)^2}{|x - y|^4},
\end{align*}
and
\begin{align*}
2 \left\{ \left( \dfrac{\partial \xi^\prime_1}{\partial y_1} \right) \left( \dfrac{\partial \xi^\prime_1}{\partial y_2} \right) + \left( \dfrac{\partial \xi^\prime_2}{\partial y_1} \right) \left( \dfrac{\partial \xi^\prime_2}{\partial y_2} \right) \right\} =& 2 \left( -\dfrac{1}{|x - y|} + \dfrac{(x_1 - y_1)^2}{|x - y|^3} \right) \left( \dfrac{(x_1 - y_1)(x_2 - y_2)}{|x - y|^3} \right)\\
&+ 2 \left( \dfrac{(x_2 - y_2)(x_1 - y_1)}{|x - y|^3} \right) \left( -\dfrac{1}{|x - y|} + \dfrac{(x_2 - y_2)^2}{|x - y|^3} \right)\\
=& -2 \dfrac{(x_1 - y_1)(x_2 - y_2)^3}{|x - y|^6} -2 \dfrac{(x_2 - y_2)(x_1 - y_1)^3}{|x - y|^6}\\
=& -2 \dfrac{(x_1 - y_1)(x_2 - y_2)}{|x - y|^4}.  
\end{align*}
Thus, we have
\begin{align*}
d\sigma_{\xi^\prime} =& \dfrac{1}{|x - y|^2} \left\{ (x_2 - y_2)^2 \left(\dfrac{dy_1}{dt} \right)^2 \right.\\
&\left. -2 (x_1 - y_1)(x_2 - y_2) \left(\dfrac{dy_1}{dt} \right) \left(\dfrac{dy_2}{dt} \right) + (x_1 - y_1)^2 \left(\dfrac{dy_2}{dt} \right)^2 \right\}^{1/2} \,dt\\
=& \dfrac{1}{|x - y|^2} \left| (x_2 - y_2) \left(\dfrac{dy_1}{dt} \right) -  (x_1 - y_1) \left(\dfrac{dy_2}{dt} \right) \right|\,dt\\
=& \dfrac{\left|(x - y) \cdot n \bigl( y(t) \bigr) \right|}{|x - y|^2} \sqrt{\left(\dfrac{dy_1}{dt} \right)^2 + \left(\dfrac{dy_2}{dt} \right)^2}\,dt = \dfrac{\left|(x - y) \cdot n(y) \right|}{|x - y|^2}\,d\sigma_y,
\end{align*}
where $d\sigma_y$ is the Lebesgue measure on $\partial \Omega$. We note that, since 
\begin{equation*}
\left( \dfrac{d y_1}{dt}(t), \dfrac{d y_2}{dt}(t) \right)
\end{equation*}
is a tangent vector at $y(t) \in \partial \Omega$, the vector
\begin{equation*}
\dfrac{1}{\sqrt{\left(\dfrac{dy_1}{dt} \right)^2 + \left(\dfrac{dy_2}{dt} \right)^2}} \left( -\dfrac{dy_2}{dt}(t), \dfrac{dy_1}{dt}(t) \right)
\end{equation*}
is a unit normal vector at $y(t) \in \partial \Omega$. 

We consider the three dimensional case $d = 3$. Let $(p_i, q_i) \in \mathbb{R}^2$ be the parametrization of $(U_i, \varphi_i)$. Then, $(P_x^{-1}(U_i), \varphi_i \circ P_x)$ has the same parametrization and
\begin{equation*}
d\sigma_{\xi^\prime} = \left| \frac{\partial \xi^\prime}{\partial p_i} \times \frac{\partial \xi^\prime}{\partial q_i} \right|\,dp_i dq_i, \quad (p_i, q_i) \in \varphi_i (U_i).
\end{equation*}
In the same way, we have
\begin{equation*}
d\sigma_y = \left| \frac{\partial y}{\partial p_i} \times \frac{\partial y}{\partial q_i} \right|\,dp_i dq_i, \quad (p_i, q_i) \in \varphi_i (U_i).
\end{equation*}

By the chain rule of differentiation, we have
\begin{equation*}
\frac{\partial \xi^\prime}{\partial p_i} = \sum_{j=1}^3 \dfrac{\partial y_j}{\partial p_i} \dfrac{\partial \xi^\prime}{\partial y_j}
\end{equation*}
and
\begin{align*}
\frac{\partial \xi^\prime}{\partial p_i} \times \frac{\partial \xi^\prime}{\partial q_i} =& \left( \sum_{j=1}^3 \dfrac{\partial y_j}{\partial p_i} \dfrac{\partial \xi^\prime}{\partial y_j} \right)
\times \left( \sum_{k=1}^3 \dfrac{\partial y_k}{\partial q_i} \dfrac{\partial \xi^\prime}{\partial y_k} \right)\\
=& \sum_{j, k=1}^3 \dfrac{\partial y_j}{\partial p_i} \dfrac{\partial y_k}{\partial q_i} \left( \dfrac{\partial \xi^\prime}{\partial y_j} \times \dfrac{\partial \xi^\prime}{\partial y_k} \right)\\
=& \left( \dfrac{\partial y_1}{\partial p_i} \dfrac{\partial y_2}{\partial q_i} - \dfrac{\partial y_2}{\partial p_i} \dfrac{\partial y_1}{\partial q_i} \right) \left( \dfrac{\partial \xi^\prime}{\partial y_1} \times \dfrac{\partial \xi^\prime}{\partial y_2} \right)\\
&+ \left( \dfrac{\partial y_1}{\partial p_i} \dfrac{\partial y_3}{\partial q_i} - \dfrac{\partial y_3}{\partial p_i} \dfrac{\partial y_1}{\partial q_i} \right) \left( \dfrac{\partial \xi^\prime}{\partial y_1} \times \dfrac{\partial \xi^\prime}{\partial y_3} \right)\\
&+ \left( \dfrac{\partial y_2}{\partial p_i} \dfrac{\partial y_3}{\partial q_i} - \dfrac{\partial y_3}{\partial p_i} \dfrac{\partial y_2}{\partial q_i} \right) \left( \dfrac{\partial \xi^\prime}{\partial y_2} \times \dfrac{\partial \xi^\prime}{\partial y_3} \right).
\end{align*}

From the explicit formula of $P_x^{-1}$ (\ref{eq:proj}), we have 
\begin{equation*}
\frac{\partial \xi_i^\prime}{\partial y_j} = -\frac{\delta_{i, j}}{|x - y|} + \frac{(x_i - y_i)(x_j - y_j)}{|x - y|^3},
\end{equation*}
where $\delta_{i, j}$ is the Kronecker's delta for $i, j = 1, 2, 3$. Thus, we have
\begin{align*}
\dfrac{\partial \xi^\prime}{\partial y_1} \times \dfrac{\partial \xi^\prime}{\partial y_2} &= \dfrac{x_3 - y_3}{|x - y|^4} (x - y), \\
\dfrac{\partial \xi^\prime}{\partial y_1} \times \dfrac{\partial \xi^\prime}{\partial y_3} &= -\dfrac{x_2 - y_2}{|x - y|^4} (x - y), \\
\dfrac{\partial \xi^\prime}{\partial y_2} \times \dfrac{\partial \xi^\prime}{\partial y_3} &= \dfrac{x_1 - y_1}{|x - y|^4} (x - y).
\end{align*}
From these formulae, we have
\begin{align*}
\frac{\partial \xi^\prime}{\partial p_i} \times \frac{\partial \xi^\prime}{\partial q_i} =& \dfrac{1}{|x - y|^4} \left\{ \left( \dfrac{\partial y_1}{\partial p_i} \dfrac{\partial y_2}{\partial q_i} - \dfrac{\partial y_2}{\partial p_i} \dfrac{\partial y_1}{\partial q_i} \right) (x_3 - y_3) \right.\\
&- \left( \dfrac{\partial y_1}{\partial p_i} \dfrac{\partial y_3}{\partial q_i} - \dfrac{\partial y_3}{\partial p_i} \dfrac{\partial y_1}{\partial q_i} \right) (x_2 - y_2) \\
&\left. + \left( \dfrac{\partial y_2}{\partial p_i} \dfrac{\partial y_3}{\partial q_i} - \dfrac{\partial y_3}{\partial p_i} \dfrac{\partial y_2}{\partial q_i} \right) (x_1 - y_1) \right\} (x - y)\\
=& \dfrac{1}{|x - y|^4} \left\{ \left( \frac{\partial y}{\partial p_i} \times \frac{\partial y}{\partial q_i} \right) \cdot (x - y) \right\} (x - y)\\
=& \dfrac{ \left| \frac{\partial y}{\partial p_i} \times \frac{\partial y}{\partial q_i} \right| } {|x - y|^4} \left\{ n(y) \cdot (x - y) \right\} (x - y)
\end{align*}
and
\begin{equation*}
\left| \frac{\partial \xi^\prime}{\partial p_i} \times \frac{\partial \xi^\prime}{\partial q_i} \right| = \dfrac{ |n(y) \cdot (x - y) |} {|x - y|^3} \left| \frac{\partial y}{\partial p_i} \times \frac{\partial y}{\partial q_i} \right|. 
\end{equation*}
Here, we note that the vector
\begin{equation*}
\dfrac{1}{\left| \frac{\partial y}{\partial p_i} \times \frac{\partial y}{\partial q_i} \right|} \frac{\partial y}{\partial p_i} \times \frac{\partial y}{\partial q_i} 
\end{equation*}
is a unit normal vector at $y(p, q) \in \partial \Omega$. Thus, we have
\begin{equation*}
d\sigma_{\xi^\prime} = \left| \frac{\partial \xi^\prime}{\partial p_i} \times \frac{\partial \xi^\prime}{\partial q_i} \right|\,dp_i dq_i = \dfrac{ |n(y) \cdot (x - y) |} {|x - y|^3} \left| \frac{\partial y}{\partial p_i} \times \frac{\partial y}{\partial q_i} \right|\,dp_i dq_i = \dfrac{ |n(y) \cdot (x - y) |} {|x - y|^3} d\sigma_y. \qedhere
\end{equation*}
\end{proof}

\begin{Lem} \label{lem:fn}
Suppose that the function $f^{(n)}$, defined by the recursion formula $(\ref{eq:F0})$-$(\ref{eq:F1})$, is bounded continuous on $D$ for some $n \in \mathbb{N}$. Then, the successive function $f^{(n+1)}$ is also bounded continuous on $D$. 
\end{Lem}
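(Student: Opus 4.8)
The plan is to follow the same two-stage strategy as in the proof of Lemma \ref{lem:f1}, isolating the angular (inner) integral first and then handling the spatial (outer) integral via Lemma \ref{lem:ELI}. Boundedness of $f^{(n+1)}$ requires no new work: it is precisely the content of the estimate (\ref{ineq:E}) used in Proposition \ref{prop:P1}, which gives $\sup_D |f^{(n+1)}| \le M \sup_D |f^{(n)}| < \infty$. Hence only continuity remains to be established.

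First I would introduce the angular average
\begin{equation*}
G_n(x, \xi) := \int_{S^{d-1}} p(x, \xi, \xi^\prime) f^{(n)}(x, \xi^\prime)\,d\sigma_{\xi^\prime},
\end{equation*}
so that
\begin{equation*}
f^{(n+1)}(x, \xi) = \int_0^{\tau_-(x, \xi)} \mu_s(x - s\xi) \exp\bigl(-M_t(x, \xi; s)\bigr) G_n(x - s\xi, \xi)\,ds.
\end{equation*}
This $G_n$ plays the role that $G$ played in Lemma \ref{lem:f1}, and the key claim is that $G_n$ is bounded continuous on $\Omega_0 \times S^{d-1}$. Boundedness is immediate from boundedness of $p$ and $f^{(n)}$. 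For continuity at a fixed $(\bar{x}, \bar{\xi}) \in \Omega_0 \times S^{d-1}$, I would note that, because $\bar{x}$ lies in the open set $\Omega_0 \subset \Omega$, for $(x, \xi)$ close to $(\bar{x}, \bar{\xi})$ the point $(x, \xi^\prime)$ stays in $\Omega \times S^{d-1} \subset D$, so the integrand $p(x, \xi, \xi^\prime) f^{(n)}(x, \xi^\prime)$ is well defined, is jointly continuous in $(x, \xi)$ for each $\xi^\prime$ (by continuity of $p$ on $\Omega_0 \times S^{d-1} \times S^{d-1}$ and of $f^{(n)}$ on $D$), and is dominated by the integrable constant $(\sup p)(\sup_D |f^{(n)}|)$ on the compact sphere $S^{d-1}$. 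The dominated convergence theorem then yields continuity of $G_n$. I would emphasize that this step is genuinely simpler than the corresponding Lemma \ref{lem:G}: there the integrand involved $f_0(P(x, \xi^\prime), \xi^\prime)$, which is continuous in only one of its two arguments, forcing the change of variables to $\partial \Omega$; here $f^{(n)}$ is jointly continuous by the induction hypothesis, so no such device is needed.

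Having established continuity of $G_n$ on $\Omega_0 \times S^{d-1}$, I would extend it by zero to $\widetilde{G_n}$ on $\mathbb{R}^d \times S^{d-1}$ and set $h(x, \xi) := \mu_s(x) \widetilde{G_n}(x, \xi)$. Since $\mu_s$ is continuous on $\Omega_0$ and vanishes off $\Omega_0$, the function $h$ is bounded, continuous on $\Omega_0 \times S^{d-1}$, and zero on $(\mathbb{R}^d \setminus \Omega_0) \times S^{d-1}$, which are exactly the hypotheses of Lemma \ref{lem:ELI}. Moreover, because $\mu_s(x - s\xi) = 0$ whenever $s > \tau_-(x, \xi)$ (the point $x - s\xi$ having left $\Omega \supset \Omega_0$), I may replace the upper limit $\tau_-(x, \xi)$ by the diameter $R$ without changing the value, giving
\begin{equation*}
f^{(n+1)}(x, \xi) = \int_0^R \exp\bigl(-M_t(x, \xi; s)\bigr) h(x - s\xi, \xi)\,ds.
\end{equation*}
Lemma \ref{lem:ELI} then shows the right-hand side is continuous at every $(x, \xi) \in \mathbb{R}^d \times S^{d-1}$, in particular on $D$, completing the induction step.

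The only place demanding care is the continuity of $G_n$, and even there the induction hypothesis (joint continuity of $f^{(n)}$) reduces the argument to a direct application of dominated convergence; the outer integral is then routine via Lemma \ref{lem:ELI}. I would therefore expect no serious obstacle beyond the bookkeeping of the zero extension and the replacement of $\tau_-$ by $R$.
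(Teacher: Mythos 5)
Your proof is correct and follows essentially the same route as the paper: the paper zero-extends $f^{(n)}$ itself and applies Lemma \ref{lem:ELI} with $h(x, \xi) = \mu_s(x) \int_{S^{d-1}} p(x, \xi, \xi^\prime) \widetilde{f}^{(n)}(x, \xi^\prime)\,d\sigma_{\xi^\prime}$, which is exactly the same function as your $\mu_s(x) \widetilde{G_n}(x, \xi)$ since $\mu_s$ vanishes off $\Omega_0$. Your explicit dominated-convergence verification that the angular average $G_n$ is continuous on $\Omega_0 \times S^{d-1}$ spells out a step the paper leaves implicit, but it is the same argument, not a different method.
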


\begin{proof}
In the same way as the proof of Lemma \ref{lem:f1}, let $\widetilde{f}^{(n)}$ be the zero extention of $f^{(n)}$ to $\mathbb{R}^d \times S^{d-1}$. Then, we have
\begin{equation*}
f^{(n+1)}(x, \xi) = \int_0^R \mu_s(x - s\xi) \exp \Bigl(- M_t(x, \xi; s) \Bigr) \int_{S^{d-1}} p(x - s\xi, \xi, \xi^\prime) \widetilde{f}^{(n)}(x - s\xi, \xi^\prime)\,d\sigma_{\xi^\prime}ds
\end{equation*}
for all $(x, \xi) \in D$. The conclusion follows from Lemma \ref{lem:ELI} with $h(x, \xi) = \mu_s(x) \int_{S^{d-1}} p(x, \xi, \xi^\prime) \widetilde{f}^{(n)}(x, \xi^\prime)$.
\end{proof}
 
Thus, we succeed to separate the solution into two parts, the discontinuous part $F_0$ and the continuity part $F_1$.

\section{Equivalence between the stationary transport equation and the derived integral equation} \label{subsec:SAI}
We state the equivalence between the boundary value problem (\ref{eq:STE})-(\ref{eq:BC}) and the integral equation (\ref{eq:IE}). It is obvious that the solution $f(\cdot, \xi)$ to the integral equation is continuous on the line segment $\{ x + t\xi | t \in \mathbb{R}\} \cap (\Omega \cup \Gamma_{-, \xi})$ for all $(x, \xi) \in D$. Thus, we show that  the solution to the integral equation satisfies the other conditions of a solution to the boundary value problem. To this end, we show the following lemma.

\begin{Lem} \label{lem:I}
Let $f$ be the solution to the integral equation $(\ref{eq:IE})$. Under the assumption in Theorem \ref{thm:MR1}, the integral
\begin{equation*}
\int_{S^{d-1}} p(x, \xi, \xi^\prime) f(x, \xi^\prime)\,d\sigma_{\xi^\prime}
\end{equation*}
is bounded continuous for $(x, \xi) \in \Omega_0 \times S^1$.
\end{Lem}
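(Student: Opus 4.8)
The plan is to exploit the decomposition $f = F_0 + F_1$ established in Section 3 and to split the angular integral accordingly, namely
\[
\int_{S^{d-1}} p(x, \xi, \xi^\prime) f(x, \xi^\prime)\,d\sigma_{\xi^\prime} = \int_{S^{d-1}} p(x, \xi, \xi^\prime) F_0(x, \xi^\prime)\,d\sigma_{\xi^\prime} + \int_{S^{d-1}} p(x, \xi, \xi^\prime) F_1(x, \xi^\prime)\,d\sigma_{\xi^\prime}.
\]
The point is that, although $f$ itself is discontinuous across $\partial \Omega_0$, each of the two pieces can be handled by a result already in hand.

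First I would treat the $F_0$-contribution. Recalling $F_0 = f^{(0)}$ and the explicit formula (\ref{eq:F0}), the first integral equals
\[
\int_{S^{d-1}} p(x, \xi, \xi^\prime) \exp \Bigl( -M_t(x, \xi^\prime; \tau_-(x, \xi^\prime)) \Bigr) f_0(P(x, \xi^\prime), \xi^\prime)\,d\sigma_{\xi^\prime},
\]
which is precisely the function $G(x, \xi)$ defined in (\ref{eq:G}). By Lemma \ref{lem:G}, $G$ is bounded continuous on $\Omega_0 \times S^{d-1}$, so this term requires no further work.

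Next I would treat the $F_1$-contribution by dominated convergence. Here I would use the fact, proved in Section 3 (Lemmas \ref{lem:f1} and \ref{lem:fn} together with the uniform convergence from Proposition \ref{prop:P2}), that $F_1$ is bounded continuous on all of $D$, and in particular continuous across $\partial \Omega_0$, unlike $F_0$. Fixing $(\overline{x}, \overline{\xi}) \in \Omega_0 \times S^{d-1}$ and letting $(x, \xi) \to (\overline{x}, \overline{\xi})$ with $x \in \Omega$, the integrand $p(x, \xi, \xi^\prime) F_1(x, \xi^\prime)$ converges pointwise in $\xi^\prime$ to $p(\overline{x}, \overline{\xi}, \xi^\prime) F_1(\overline{x}, \xi^\prime)$ by continuity of $p$ on $\Omega_0 \times S^{d-1} \times S^{d-1}$ and of $F_1$ on $D$; moreover it is dominated by the constant $\bigl( \sup p \bigr)\bigl( \sup_D |F_1| \bigr)$, which is integrable over the finite-measure sphere $S^{d-1}$. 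Hence the second integral is continuous at $(\overline{x}, \overline{\xi})$, and its boundedness is immediate from the same bound.

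Adding the two bounded continuous contributions yields the claim. I do not expect a genuine obstacle here: essentially all the analytic difficulty, namely the change of variables of Lemma \ref{lem:J} and the dominated-convergence argument for the boundary term under either hypothesis of Theorem \ref{thm:MR1}, has already been absorbed into Lemma \ref{lem:G}. The only thing to get right is the observation that it is legitimate to split off $F_1$ and apply dominated convergence to it directly, which works precisely because $F_1$, in contrast to the full solution $f$, is continuous up to and across the interfaces $\partial \Omega_0$.
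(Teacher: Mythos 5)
Your proof is correct and takes essentially the same approach as the paper: the paper's proof likewise splits $f = F_0 + F_1$, identifies the $F_0$-term with the function $G$ of Lemma \ref{lem:G}, and handles the $F_1$-term via the bounded continuity of $F_1$ on $D$ established in Section 3. Your explicit dominated-convergence argument for the $F_1$-term merely spells out a step the paper leaves implicit.
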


\begin{proof}
From the decomposition in section 3, we have
\begin{equation*}
\int_{S^{d-1}} p(x, \xi, \xi^\prime) f(x, \xi^\prime)\,d\sigma_{\xi^\prime} = \int_{S^{d-1}} p(x, \xi, \xi^\prime) F_0(x, \xi^\prime)\,d\sigma_{\xi^\prime} + \int_{S^{d-1}} p(x, \xi, \xi^\prime) F_1(x, \xi^\prime)\,d\sigma_{\xi^\prime}.
\end{equation*}
From Lemma \ref{lem:G}, the function $G(x, \xi) = \int_{S^{d-1}} p(x, \xi, \xi^\prime) F_0(x, \xi^\prime)\,d\sigma_{\xi^\prime}$ is bounded continuous on $\Omega_0 \times S^{d-1}$. On the other hand, $\int_{S^{d-1}} p(x, \xi, \xi^\prime) F_1(x, \xi^\prime)\,d\sigma_{\xi^\prime}$ is also bounded continuous on $\Omega_0 \times S^{d-1}$ since $F_1$ is bounded continuous on $D$ as we see in section 3. Thus, \\
$\int_{S^{d-1}} p(x, \xi, \xi^\prime) f(x, \xi^\prime)\,d\sigma_{\xi^\prime}$ is also bounded continuous on $\Omega_0 \times S^{d-1}$.
\end{proof}

Now we are ready to check the properties of a solution to the boundary value problem (\ref{eq:STE})-(\ref{eq:BC}). From the integral equation (\ref{eq:IE}), noting that $\tau_-(x + t\xi, \xi) = \tau_-(x, \xi) + t$ and $P(x + t\xi, \xi) = P(x, \xi)$, we have 
\begin{align*}
f(x + t\xi, \xi) =& \exp \Bigl(- M_t \bigl(x + t\xi, \xi; \tau_-(x + t\xi, \xi) \bigr) \Bigr) f_0(P(x + t\xi, \xi), \xi)\\
&+ \int_0^{\tau_-(x + t\xi, \xi)} \mu_s(x + t\xi - s\xi) \exp \Bigl( - M_t (x + t\xi, \xi; s) \Bigr)\\
&\quad \times \int_{S^{d-1}} p(x + t\xi - s\xi, \xi, \xi^\prime) f(x + t\xi - s\xi, \xi^\prime)\,d\sigma_{\xi^\prime}ds\\
=& \exp \bigl(M_t \left(x, \xi; -t \right) \bigr) \exp \Bigl(- M_t \bigl(x, \xi; \tau_-(x, \xi) \bigr) \Bigr) f_0(P(x, \xi), \xi)\\
&+ \exp \bigl(M_t \left(x, \xi; -t \right) \bigr) \int_{-t}^{\tau_-(x, \xi)} \mu_s(x -  s\xi) \exp \Bigl( - M_t (x, \xi; s) \Bigr)\\
&\quad \times \int_{S^{d-1}} p(x - s\xi, \xi, \xi^\prime) f(x - s\xi, \xi^\prime)\,d\sigma_{\xi^\prime}ds.
\end{align*}
From Lemma \ref{lem:I}, we can take the directional derivative of $f$:
\begin{align*}
\xi \cdot \nabla_x f(x, \xi) =& \left. \dfrac{d}{dt} f(x + t\xi, \xi) \right|_{t = 0}\\ 
=& -\mu_t(x) \exp \Bigl(- M_t \bigl(x, \xi; \tau_-(x, \xi) \bigr) \Bigr) f_0(P(x, \xi), \xi)\\
&-\mu_t(x) \int_0^{\tau_-(x, \xi)} \mu_s(x -  s\xi) \exp \Bigl( - M_t (x, \xi; s) \Bigr)\\
&\quad \times \int_{S^{d-1}} p(x - s\xi, \xi, \xi^\prime) f(x - s\xi, \xi^\prime)\,d\sigma_{\xi^\prime}ds\\
&+ \mu_s(x) \int_{S^{d-1}} p(x, \xi, \xi^\prime) f(x, \xi^\prime)\,d\sigma_{\xi^\prime}ds\\ 
=& -\mu_t(x) f(x, \xi) + \mu_s(x) \int_{S^{d-1}} p(x, \xi, \xi^\prime) f(x, \xi^\prime)\,d\sigma_{\xi^\prime}ds
\end{align*} 
for all $(x, \xi) \in \Omega_0 \times S^{d-1}$. Thus, the solution $f$ has the directional derivative $\xi \cdot \nabla_x f(x, \xi)$ and satisfies the stationary transport equation (\ref{eq:STE}) for all $(x, \xi) \in \Omega_0 \times S^{d-1}$. Moreover, for $(x, \xi) \in \Omega_0 \times S^{d-1}$, the directional derivative $\xi \cdot \nabla_x f(\cdot, \xi)$ is continuous on the line segments $\{ x + t\xi | t \in ( t_{j-1}(x, \xi), t_j(x, \xi) )\}$, $j = 1, \ldots, l(x, \xi)$ with $t_0(x, \xi)$ = 0. 

Finally, for all $(x, \xi) \in \Gamma_-$, 
\begin{equation*}
f^{(n)}(x, \xi) = 
\begin{cases}
f_0(x, \xi), &n = 0, \\
0, &n \geq 1.
\end{cases}
\end{equation*}
Thus,
\begin{equation*}
f(x, \xi) = \sum_{n=0}^\infty f^{(n)}(x, \xi) = f_0(x, \xi)
\end{equation*}
for all $(x, \xi) \in \Gamma_-$, which implies that the solution $f$ satisfies the incoming boundary condition (\ref{eq:BC}).

Therefore, the boundary value problem of the stationary transport equation (\ref{eq:STE})-(\ref{eq:BC}) and the integral equation (\ref{eq:IE}) are equivalent in this framework.

\section{Discontinuity of the extended solution} \label{subsec:DES}
We extend the definition domain of the solution $f$ up to the outgoing boundary $\Gamma_+$ and discuss the boundary-induced discontinuity of the extended solution.

First, we extend the definition domain of the solution $f$ up to the outgoing boundary $\Gamma_+$. The idea of extension originates from Cessenat \cite{Ces}, who defined it in a weak sense.

\begin{Lem}
For all $(x, \xi) \in \Gamma_+$, the limit
\begin{equation*}
\overline{f}(x, \xi) := \lim_{t \downarrow 0} f(x - t\xi, \xi)
\end{equation*}
exists.
\end{Lem}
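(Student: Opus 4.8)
The plan is to use the decomposition $f = F_0 + F_1$ established in Section 3 and to analyze the two parts separately along the backward characteristic $t \mapsto x - t\xi$, taking advantage of the fact that the relevant geometric quantities behave very simply along this line. Throughout I fix $(x,\xi) \in \Gamma_+$ and restrict to $0 < t < \tau_-(x,\xi)$, so that $x - t\xi \in \Omega$ and hence $(x-t\xi,\xi) \in D$.

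First I would record the two elementary identities $\tau_-(x - t\xi, \xi) = \tau_-(x,\xi) - t$ and $P(x - t\xi, \xi) = P(x,\xi)$, both of which follow exactly as in the computation opening Section 4 (with $t$ replaced by $-t$). The second identity is the crucial one: the backward boundary projection is \emph{constant} along the characteristic, so that the possibly discontinuous factor $f_0(P(\cdot,\xi),\xi)$ occurring in $F_0$ takes the single fixed value $f_0(P(x,\xi),\xi)$ for every admissible $t$. Here $(P(x,\xi),\xi) \in \Gamma_-$ by convexity and $C^1$ regularity of $\partial\Omega$, so this value is well defined. Using the explicit formula for $F_0 = f^{(0)}$ together with the first identity, I would write $F_0(x-t\xi,\xi) = \exp(-M_t(x-t\xi,\xi;\tau_-(x-t\xi,\xi))) f_0(P(x,\xi),\xi)$, and change variables in the exponent to get $M_t(x-t\xi,\xi;\tau_-(x-t\xi,\xi)) = \int_t^{\tau_-(x,\xi)} \mu_t(x-r\xi)\,dr$. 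As $t \downarrow 0$ this converges to $M_t(x,\xi;\tau_-(x,\xi))$ by boundedness of $\mu_t$ (dominated convergence), while the $f_0$ factor is constant in $t$; hence $\lim_{t\downarrow 0} F_0(x-t\xi,\xi)$ exists and equals $\exp(-M_t(x,\xi;\tau_-(x,\xi)))\,f_0(P(x,\xi),\xi)$.

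For the $F_1$ part I would exploit that each $f^{(n)}$ with $n \geq 1$ is given, after zero extension of the relevant integrand, by an integral over the \emph{fixed} interval $[0,R]$, and that Lemmas \ref{lem:ELI} and \ref{lem:fn} show these functions are continuous on all of $\mathbb{R}^d \times S^{d-1}$, not merely on $D$. Since the estimate in Proposition \ref{prop:P2} bounds the series $\sum_{n\geq 1} f^{(n)}$ by a convergent geometric series controlled solely by the global quantity $M$ and $\sup_{\Gamma_-}|f_0|$, the convergence is uniform on all of $\mathbb{R}^d \times S^{d-1}$, so $F_1$ extends to a continuous function there. Consequently $\lim_{t\downarrow 0} F_1(x-t\xi,\xi) = F_1(x,\xi)$ exists. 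Adding the two limits gives the existence of $\overline{f}(x,\xi) = \lim_{t\downarrow 0} f(x-t\xi,\xi)$.

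The hard part will be recognizing \emph{why} the limit exists even though $F_0$ is genuinely discontinuous arbitrarily close to $(x,\xi) \in \Gamma_+$: the resolution is precisely the invariance $P(x-t\xi,\xi)=P(x,\xi)$, which confines the whole backward approach to a single value of $f_0$ and thereby neutralizes the discontinuity along the characteristic direction. Once this is noticed, the remainder is routine convergence of bounded integrals. The only secondary technical point to check carefully is that the uniform convergence of the $F_1$ series genuinely persists off $D$ (on $\mathbb{R}^d \times S^{d-1}$, and in particular up to $\Gamma_+$), which is exactly what the global form of the geometric-series bound in Proposition \ref{prop:P2} provides.
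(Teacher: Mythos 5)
Your proof is correct, but it takes a genuinely different route from the paper's. The paper proves the lemma via the fundamental theorem of calculus along the characteristic: it writes $f(x,\xi) = f_0(P(x,\xi),\xi) - \sum_{j} \int_{t_{j-1}(x,\xi)}^{t_j(x,\xi)} \xi\cdot\nabla_x f(x-s\xi,\xi)\,ds$, notes that the directional derivative $\xi\cdot\nabla_x f = -\mu_t f + \mu_s \int_{S^{d-1}} p f\,d\sigma_{\xi^\prime}$ is bounded (by boundedness of $f$, $\mu_t$, $\mu_s$, $p$), defines $f|_{\Gamma_+}$ by this formula, and obtains the quantitative estimate $|f(x,\xi)-f(x-t\xi,\xi)| \le \bigl( \sup |\xi\cdot\nabla_x f| \bigr)\,t$, which gives the limit with a Lipschitz rate. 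You instead reuse the decomposition $f = F_0 + F_1$: the invariance $P(x-t\xi,\xi)=P(x,\xi)$ disposes of $F_0$ explicitly, and global continuity of each $f^{(n)}$, $n\ge 1$, from Lemmas \ref{lem:ELI} and \ref{lem:fn} (whose proofs, as you correctly note, yield continuity on $\mathbb{R}^d\times S^{d-1}$, not merely the continuity on $D$ stated) together with the geometric-series bound disposes of $F_1$. Both arguments run under the same standing hypotheses: you need the assumption of Theorem \ref{thm:MR1} through Lemmas \ref{lem:G}--\ref{lem:fn}, while the paper needs it through Section 4 to know that the directional derivative exists and is bounded. What each buys: the paper's proof is shorter and gives $O(t)$ convergence; yours gives more structure, identifying the limit explicitly as $\overline{F_0}+\overline{F_1}$ and in effect already establishing the continuity of $\overline{F_1}$ up to $\Gamma_+$, i.e.\ Proposition \ref{prop:EF1}, which the paper defers to Section 5 with only the remark that it follows ``in the same way as in section 3.'' One small refinement to your last technical point: off $D$ the geometric ratio is not literally the constant $M$ (a supremum taken over $D$) but the global quantity $1 - \exp\bigl(-\sup \mu_t \cdot R\bigr) < 1$; on $D$ the two coincide because $\mu_t$ vanishes on $\mathbb{R}^d\setminus\Omega_0$ and $\Omega$ is convex, and either constant is strictly less than $1$, so your series argument stands.
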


\begin{proof}
By the fundamental theorem of calculus, we have
\begin{equation} \label{eq:FTC}
f(x, \xi) = f_0(P(x, \xi), \xi) - \sum_{j = 1}^{l(x, \xi)} \int_{t_{j-1}(x, \xi)}^{t_j(x, \xi)} \xi \cdot \nabla_x f(x - s\xi, \xi)\,ds.
\end{equation}
From the boundedness of the directional derivative $\xi \cdot \nabla_x f(x, \xi)$ the integral in the right hand side of equation (\ref{eq:FTC}) is even defined for all $(x, \xi) \in \Gamma_+$. Thus, we define $f|_{\Gamma_+}$ by equation (\ref{eq:FTC}). 

From equation (\ref{eq:FTC}), for $(x, \xi) \in \Gamma_+$ and for sufficiently small $t > 0$,
\begin{align*}
f(x - t\xi, \xi) =& f_0(P(x - t\xi, \xi), \xi) - \sum_{j = 1}^{l(x - t\xi, \xi)} \int_{t_{j-1}(x - t\xi, \xi)}^{t_j(x - t\xi, \xi)} \xi \cdot \nabla_x f(x - t\xi - s\xi, \xi)\,ds\\
=& f_0(P(x, \xi), \xi) - \sum_{j = 1}^{l(x, \xi)} \int_{t_{j-1}(x, \xi)}^{t_j(x, \xi)} \xi \cdot \nabla_x f(x - s\xi, \xi)\,ds + \int_0^t \xi \cdot \nabla_x f(x - s\xi, \xi)\,ds.
\end{align*}
Here, we used the relation $\tau_-(x - t\xi, \xi) = \tau_-(x, \xi) - t$. Thus, we have
\begin{align*}
|f(x, \xi) - f(x - t\xi, \xi)| =& \left|\int_0^t \xi \cdot \nabla_x f(x - s\xi, \xi)\,ds \right|\\
\leq& \left( \sup_{(x, \xi) \in \Omega \times S^{d-1}} |\xi \cdot \nabla_x f(x, \xi)| \right)t.
\end{align*}
By this estimate, we conclude the proof.
\end{proof}
From this observation, we can extend the definition domain of the solution $f$ up to $\Gamma_+$. 

Second, we discuss the boundary-induced discontinuity of the extended solution. Let $\overline{D} := D \cup \Gamma_+$ and let $\overline{f}$ be the extended solution to the boundary value problem (\ref{eq:STE})-(\ref{eq:BC}) up to $\Gamma_+$. Since the recursion formulae (\ref{eq:F0})-(\ref{eq:F1}) make sense even for $(x, \xi) \in \Gamma_+$, we extend the sequence of functions $\{ f^{(n)} \}_{n \geq 0}$ up to $\Gamma_+$, which is denoted by $\{ \overline{f}^{(n)} \}_{n \geq 0}$. In the same way as in section 3, we can show that the following propositions. In what follows, we use the following notations:
\begin{align*}
\overline{F_0}(x, \xi) &:= \overline{f}^{(0)}(x, \xi), \\
\overline{F_1}(x, \xi) &:= \sum_{n = 1}^\infty \overline{f}^{(n)}(x, \xi).
\end{align*}

\begin{Prop} \label{prop:EF0}
\begin{equation*}
\disc(\overline{F_0}) = \{(x_* + t\xi_*, \xi_*) | (x_*, \xi_*) \in \disc(f_0), 0 \leq t \leq \tau_+(x_*, \xi_*) \}.
\end{equation*}
\end{Prop}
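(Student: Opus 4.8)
The plan is to mirror the proof of Proposition \ref{prop:F0} exactly, since $\overline{F_0}$ is defined by the same explicit formula as $F_0$ but now on the larger domain $\overline{D} = D \cup \Gamma_+$. Recall that
\begin{equation*}
\overline{F_0}(x, \xi) = \exp \Bigl( -M_t(x, \xi; \tau_-(x, \xi)) \Bigr) f_0(P(x, \xi), \xi),
\end{equation*}
where $P(x, \xi) = x - \tau_-(x, \xi)\xi$. First I would observe that the two factors governing continuity behave well up to $\Gamma_+$: the map $\tau_-$ is continuous on the whole of $\overline{D}$ (by the convexity of $\Omega$ and the $C^1$ regularity of $\partial \Omega$, see \cite{Guo}), and Corollary \ref{cor:LI} applied with $g(x, \xi) = \mu_t(x)$ shows that the attenuation factor $\exp(-M_t(x, \xi; \tau_-(x, \xi)))$ is continuous on $\overline{D}$. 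The key point to verify is that Corollary \ref{cor:LI} remains valid when the point $(x, \xi)$ ranges over $\Gamma_+$ and not merely $D$; this follows because the argument of Lemma \ref{lem:LI} is purely a dominated-convergence statement about $\int_0^{\tau_-(x,\xi)} g(x - r\xi, \xi)\,dr = \int_0^R g(x - r\xi, \xi)\,dr$, which is insensitive to whether $(x, \xi)$ lies on the outgoing boundary.

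Given continuity of the attenuation factor, the discontinuities of $\overline{F_0}$ coincide exactly with the discontinuities pulled back from $f_0$ through the map $(x, \xi) \mapsto (P(x, \xi), \xi)$, that is,
\begin{equation*}
(x, \xi) \in \disc(\overline{F_0}) \Leftrightarrow (x - \tau_-(x, \xi)\xi, \xi) \in \disc(f_0).
\end{equation*}
Writing $x_* = x - \tau_-(x, \xi)\xi$ and $t = \tau_-(x, \xi)$, a point $(x, \xi) \in \overline{D}$ is a discontinuity of $\overline{F_0}$ precisely when $(x_*, \xi) \in \disc(f_0)$ and $x = x_* + t\xi$ for some admissible $t$. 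The only genuine difference from Proposition \ref{prop:F0} lies in the range of $t$: for $(x, \xi) \in D$ one has $0 \le \tau_-(x, \xi) < \tau_+(x_*, \xi)$, whereas now allowing $(x, \xi)$ to reach the outgoing boundary $\Gamma_+$ permits the endpoint value $\tau_-(x, \xi) = \tau_+(x_*, \xi)$, which is attained exactly when $x \in \partial \Omega$ with $(x, \xi) \in \Gamma_+$. This upgrades the strict inequality $t < \tau_+(x_*, \xi_*)$ of Proposition \ref{prop:F0} to the closed condition $0 \le t \le \tau_+(x_*, \xi_*)$ in the statement.

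The main thing to be careful about is the boundary case, namely confirming that the endpoint $t = \tau_+(x_*, \xi_*)$ genuinely belongs to $\disc(\overline{F_0})$ and is not spuriously removed by continuity of the extension. Here I would use that $\overline{F_0}$ on $\Gamma_+$ is defined as the limit along the characteristic line from inside $\Omega$, so the jump in $f_0$ at $(x_*, \xi_*)$ propagates all the way to the tip $x_* + \tau_+(x_*, \xi_*)\xi_* \in \partial \Omega$ without being smoothed out, since the attenuation factor is continuous and nonvanishing there. Conversely no new discontinuities are created on $\Gamma_+$ because the continuous factor cannot introduce them. With the endpoint inclusion settled, the set equality follows, completing the proof in the same manner as Proposition \ref{prop:F0}.
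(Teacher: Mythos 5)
Your proposal is correct and is essentially the paper's own argument: the paper gives no separate proof of Proposition \ref{prop:EF0}, stating only that it follows ``in the same way as in section 3,'' i.e.\ by repeating the proof of Proposition \ref{prop:F0} for the explicit formula of $\overline{F_0}$ on $\overline{D}$, which is exactly what you do. Your additional checks --- that Lemma \ref{lem:LI}/Corollary \ref{cor:LI} hold up to $\Gamma_+$ (the lemma is in fact stated for all $(x,\xi)\in\mathbb{R}^d\times S^{d-1}$), that the nonvanishing continuous attenuation factor neither removes nor creates discontinuities, and that the endpoint $t=\tau_+(x_*,\xi_*)$ is attained precisely on $\Gamma_+$ --- are the correct bookkeeping that upgrades the strict inequality to $0\leq t\leq\tau_+(x_*,\xi_*)$.
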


\begin{Prop} \label{prop:EF1}
Under the assumption in Theorem \ref{thm:MR1}, $\overline{F_1}$ is bounded countinuous on $\overline{D}$.
\end{Prop}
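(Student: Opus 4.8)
The plan is to establish Proposition \ref{prop:EF1} by mimicking, step by step, the continuity argument already carried out in section 3 for $F_1$ on $D$, but now extending every ingredient up to $\Gamma_+$. Since Proposition \ref{prop:P2} (absolute and uniform convergence of $\sum_n f^{(n)}$) extends verbatim to $\overline{D}$---the estimate $\sup_{\overline{D}}|\overline{f}^{(n)}| \leq M^n \sup_{\Gamma_-}|f_0|$ depends only on the recursion formula and the bound $0 \leq M < 1$, both of which are insensitive to whether we evaluate at $\Gamma_+$ or in the interior---it suffices to prove that each $\overline{f}^{(n)}$, $n \geq 1$, is bounded continuous on $\overline{D}$, after which the uniform limit of continuous functions is continuous.

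First I would prove continuity of $\overline{f}^{(1)}$ on $\overline{D}$, paralleling Lemma \ref{lem:f1}. The crucial observation is that the integral representations used there are \emph{global} in the path variable: writing $\overline{f}^{(1)}(x,\xi) = \int_0^R \mu_s(x-s\xi)\exp(-M_t(x,\xi;s))\widetilde{G}(x-s\xi,\xi)\,ds$ with the zero-extended kernel $\widetilde{G}$, the integral runs over the fixed interval $[0,R]$ rather than $[0,\tau_-(x,\xi)]$. Consequently Lemma \ref{lem:ELI}, whose statement and proof already concern continuity at \emph{all} $(x,\xi) \in \mathbb{R}^d \times S^{d-1}$, applies directly and yields continuity of $\overline{f}^{(1)}$ at points of $\Gamma_+$ as well; the auxiliary Lemma \ref{lem:G} giving continuity of $G$ on $\Omega_0 \times S^{d-1}$ is unchanged and is all that is needed to define $\widetilde{G}$. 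The inductive step then follows Lemma \ref{lem:fn} identically: assuming $\overline{f}^{(n)}$ is bounded continuous on $\overline{D}$, its zero extension $\widetilde{f}^{(n)}$ feeds into Lemma \ref{lem:ELI} with $h(x,\xi) = \mu_s(x)\int_{S^{d-1}} p(x,\xi,\xi')\widetilde{f}^{(n)}(x,\xi')\,d\sigma_{\xi'}$, producing bounded continuity of $\overline{f}^{(n+1)}$ on all of $\mathbb{R}^d \times S^{d-1}$, in particular on $\overline{D}$. Finally, uniform convergence on $\overline{D}$ from the extended Proposition \ref{prop:P2} upgrades the continuous partial sums to a continuous limit $\overline{F_1}$.

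The step I expect to require the most care---though it is more a matter of bookkeeping than of genuine difficulty---is verifying that the continuity lemmas of section 3 really do hold up to $\Gamma_+$ without modification, i.e.\ that nothing in the passage from $f^{(n)}$ to its boundary extension $\overline{f}^{(n)}$ breaks the hypotheses of Lemma \ref{lem:ELI}. Here the key points are that $\tau_\pm$ are continuous on the closed set $\overline{D}$ (as recorded via \cite{Guo}), that the $[0,R]$-integral representation absorbs the behavior at $\Gamma_+$ since $\mu_s$ and $\mu_t$ vanish outside $\Omega_0$, and that the generalized convexity condition bounds the number of crossings of $\partial\Omega_0$ uniformly, so Lebesgue's convergence theorem applies at boundary points exactly as in the interior. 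Once these observations are in place, the proof is a direct transcription of the section 3 argument, and Proposition \ref{prop:EF1} follows.
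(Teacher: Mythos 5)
Your proposal is correct and coincides with the paper's own treatment: the paper proves Proposition~\ref{prop:EF1} only by remarking that the recursion formulae $(\ref{eq:F0})$--$(\ref{eq:F1})$ make sense on $\Gamma_+$ and that the argument is ``in the same way as in section 3,'' and your write-up supplies exactly the intended details. In particular, your key observations---that Lemma~\ref{lem:ELI} already gives continuity at \emph{all} $(x,\xi)\in\mathbb{R}^d\times S^{d-1}$, that the $[0,R]$-representation with the zero-extended kernels absorbs the boundary case because $\mu_s$ vanishes outside $\Omega_0$, and that the geometric bound $M<1$ extends the uniform convergence of Proposition~\ref{prop:P2} to $\overline{D}$---are precisely what makes the paper's ``same way'' assertion legitimate.
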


Proposition \ref{prop:EF0} and Proposition \ref{prop:EF1} imply Theorem \ref{thm:MR2}.

\section{Decay of jump discontinuity} \label{subsec:DJD}
We discuss the decay of a jump of the boundary-induced discontinuity in the situation introduced in section 1. Let us recall the situation of Theorem \ref{thm:MR3}. For the case $d = 3$, let $\gamma$ be a simple closed curve in $\partial \Omega$, or for the case $d = 2$, let $\gamma$ be two points in $\partial \Omega$, and we note that $\gamma$ decomposes $\partial \Omega$ into two connected components $A$ and $B$ such that $\partial \Omega = A \cup B \cup \gamma$ and $A \cap B = A \cap \gamma = B \cap \gamma = \emptyset$. Suppose
\begin{equation*}
f_0(x, \xi) =
\begin{cases}
I, &(x, \xi) \in ( (A \cup \gamma) \times S^{d-1} ) \cap \Gamma_-,\\
0, &(x, \xi) \in (B \times S^{d-1} ) \cap \Gamma_-,
\end{cases}
\end{equation*} 
where $I$ is a constant. $f_0$ satisfies the condition 2 of Theorem \ref{thm:MR1} since $f_0(x, \cdot)$ is continuous on $\Gamma_{-, x}$ for all $x \in \partial \Omega$. Here, we remark that $\disc(f_0) = \{ (x_*, \xi_*) | x_* \in \gamma, \xi_* \in \Gamma_{-, x_*} \}$. 

For  $(\overline{x}, \overline{\xi}) \in \disc(f)$, we define a jump $[f]$ on $\disc(f)$ by
\begin{equation*}
[f](\overline{x}, \overline{\xi}) := \lim_{\substack{x \rightarrow \overline{x} \\ P(x, \overline{\xi}) \in (A \cup \gamma)}} f(x, \overline{\xi}) - \lim_{\substack{x \rightarrow \overline{x} \\ P(x, \overline{\xi}) \in B}} f(x, \overline{\xi}), 
\end{equation*}
where $P(x, \xi) := x - \tau_-(x, \xi)\xi$. We note that, in our setting, $[f_0](x, \xi) = I$ for all $(x, \xi) \in \disc(f_0) = (\gamma \times S^{d-1}) \cap \Gamma_-$. In this situation, we have the following lemma.

\begin{Lem} \label{lem:DD0}
For $(x^*, \xi^*) \in \disc(\overline{f})$,
\begin{equation*}
[\overline{F_0}](x^*, \xi^*) = I \exp \Bigl( -M_t(x^*, \xi^*; \tau_-(x^*, \xi^*) )\Bigr).
\end{equation*}
\end{Lem}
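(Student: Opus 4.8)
The plan is to exploit the explicit formula $\overline{F_0}(x,\xi) = \exp(-M_t(x,\xi;\tau_-(x,\xi)))\, f_0(P(x,\xi),\xi)$ and to isolate the only discontinuous factor, namely $f_0(P(x,\xi),\xi)$. First I would record that the attenuation factor $\exp(-M_t(x,\xi;\tau_-(x,\xi)))$ is continuous on all of $\overline{D}$, including the outgoing boundary $\Gamma_+$. Indeed, since $\mu_t$ vanishes outside $\Omega_0$ and $\tau_-(x,\xi)\le R$, one has $M_t(x,\xi;\tau_-(x,\xi)) = \int_0^R \mu_t(x-r\xi)\,dr$ for every $(x,\xi)\in\overline{D}$, and the right-hand side is continuous on the whole of $\mathbb{R}^d\times S^{d-1}$ by Lemma \ref{lem:LI} (applied with $g=\mu_t$). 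Hence the attenuation factor extends continuously up to $\Gamma_+$, so all of the discontinuity of $\overline{F_0}$ along the line $\xi=\xi^*$ is carried by $f_0(P(x,\xi^*),\xi^*)$.

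Next I would compute the two one-sided limits in the definition of the jump. Fix $(x^*,\xi^*)\in\disc(\overline{f})$; by Proposition \ref{prop:EF0} we have $P(x^*,\xi^*)=x_*\in\gamma$ with $\xi^*\in\Gamma_{-,x_*}$. For $x\to x^*$ with $P(x,\xi^*)\in A\cup\gamma$ the boundary data takes the value $f_0(P(x,\xi^*),\xi^*)=I$, so by the continuity of the attenuation factor the first limit equals $I\exp(-M_t(x^*,\xi^*;\tau_-(x^*,\xi^*)))$. For $x\to x^*$ with $P(x,\xi^*)\in B$ the boundary data vanishes, $f_0(P(x,\xi^*),\xi^*)=0$, whence $\overline{F_0}(x,\xi^*)=0$ and the second limit is $0$. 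Subtracting the two limits yields
\[
[\overline{F_0}](x^*,\xi^*)=I\exp\bigl(-M_t(x^*,\xi^*;\tau_-(x^*,\xi^*))\bigr),
\]
which is the claimed identity.

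The main point to justify carefully is that the two approach regions in the definition of the jump are genuinely nonempty, so that both one-sided limits make sense. This rests on the continuity of $P(\cdot,\xi^*)=\,\cdot\,-\tau_-(\cdot,\xi^*)\xi^*$ near $x^*$ (through continuity of $\tau_-$) together with the fact that $P(x^*,\xi^*)=x_*$ lies on the separating set $\gamma$: since $\gamma$ splits $\partial\Omega$ into the two components $A$ and $B$, the nearby back-projections $P(x,\xi^*)$ populate both sides of $\gamma$, so one can approach $x^*$ keeping $P(x,\xi^*)\in A\cup\gamma$, respectively $P(x,\xi^*)\in B$. I expect this continuity-of-projection bookkeeping, combined with the continuity of the attenuation factor up to $\Gamma_+$ established above, to be the only genuinely delicate step; once these are in hand the evaluation of the jump is immediate, because in the present situation $f_0$ is piecewise constant and independent of $\xi$.
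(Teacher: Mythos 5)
Your proposal is correct and follows essentially the same route as the paper: isolate the continuous attenuation factor $\exp\bigl(-M_t(x,\xi;\tau_-(x,\xi))\bigr)$ (continuous up to $\Gamma_+$ via Lemma \ref{lem:LI} with $g=\mu_t$, exactly as you argue), evaluate $f_0\bigl(P(x,\xi^*),\xi^*\bigr)$ as $I$ and $0$ on the two approach regions, and subtract. The only point where the paper is more careful is the non-emptiness of the approach regions: continuity of $P(\cdot,\xi^*)$ alone only shows the back-projections stay \emph{near} $x_*\in\gamma$, not that both components $A$ and $B$ are actually attained, so the paper instead argues forwards --- picking $x_A\in A$ and $x_B\in B$ near $x_*$, using continuity of the outer normal and $n(x_*)\cdot\xi^*<0$ to ensure $n(x_A)\cdot\xi^*<0$, $n(x_B)\cdot\xi^*<0$, and pushing these points along the characteristic direction $\xi^*$ --- which is the rigorous version of your ``continuity-of-projection bookkeeping.''
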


\begin{proof}
Let $(x^*, \xi^*) \in \disc(\overline{f})$. For $\xi^* \in S^{d-1}$, we introduce the following two domains:
\begin{align*}
\overline{\Omega_{A, \xi^*}} :=& \{ x \in \overline{\Omega} | P(x, \xi^*) \in (A \cup \gamma) \cap \Gamma_{-, \xi^*} \},\\
\overline{\Omega_{B, \xi^*}} :=& \{ x \in \overline{\Omega} | P(x, \xi^*) \in B \cap \Gamma_{-, \xi^*} \}.
\end{align*}
We show that these two sets are not empty. Let $(x_*, \xi_*) := (x^* - \tau_-(x^*, \xi^*)\xi^*, \xi^*)$. Then, the proof of Proposition \ref{prop:F0} shows $(x_*, \xi_*) \in \disc(f_0)$, which means $x_* \in \gamma$. Let $\delta > 0$. Then, $B_\delta(x_*)$ intersects both $A$ and $B$. Take $x_A \in B_\delta(x_*) \cap A$ and $x_B \in B_\delta(x_*) \cap B$. Since $n(x_*) \cdot \xi_* < 0$ and the outer unit normal vector $n(x)$ is continuous on $\partial \Omega$, we note that $n(x_A) \cdot \xi_* < 0$ and $n(x_B) \cdot \xi_* < 0$ for sufficiently small $\delta$. Thus, $x_A + t\xi_* \in \overline{\Omega_{A, \xi^*}}$ and $x_B + t\xi_* \in \overline{\Omega_{B, \xi^*}}$ for small $t > 0$, which implies that neither $\overline{\Omega_{A, \xi^*}}$ nor $\overline{\Omega_{B, \xi^*}}$ is empty.

We are ready to prove Lemma \ref{lem:DD0}. For $x \in \overline{\Omega_{A, \xi^*}}$, 
\begin{equation*}
\overline{F_0}(x, \xi^*) = I \exp \Bigl( -M_t(x, \xi^*; \tau_-(x, \xi^*)) \Bigr).
\end{equation*}
On the other hand, for $x \in \overline{\Omega_{B, \xi^*}}$, 
\begin{equation*}
\overline{F_0}(x, \xi^*) = 0.
\end{equation*}
Thus, we have
\begin{equation*}
[\overline{F_0}](x^*, \xi^*) = I \exp \Bigl( -M_t(x, \xi^*; \tau_-(x, \xi^*)) \Bigr). \qedhere
\end{equation*}
\end{proof}

\begin{Lem} \label{lem:DD1}
For $(x^*, \xi^*) \in \disc(\overline{f})$,
\begin{equation*}
[\overline{F_1}](x^*, \xi^*) = 0.
\end{equation*}
\end{Lem}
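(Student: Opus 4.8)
The plan is to obtain this as an immediate consequence of the continuity of $\overline{F_1}$ already secured in Proposition \ref{prop:EF1}. First I would recall that $\overline{F_1} = \sum_{n \geq 1} \overline{f}^{(n)}$ is bounded and \emph{continuous} on the whole of $\overline{D}$, and that the discontinuity set $\disc(\overline{f})$, as described in Theorem \ref{thm:MR2}, is contained in $\overline{D}$. In particular, the fixed point $(x^*, \xi^*) \in \disc(\overline{f})$ is a point of continuity of $\overline{F_1}$.

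Next I would write out the jump operator applied to $\overline{F_1}$, in exact analogy with the definition used for $\overline{f}$:
\begin{equation*}
[\overline{F_1}](x^*, \xi^*) = \lim_{\substack{x \rightarrow x^* \\ P(x, \xi^*) \in (A \cup \gamma)}} \overline{F_1}(x, \xi^*) - \lim_{\substack{x \rightarrow x^* \\ P(x, \xi^*) \in B}} \overline{F_1}(x, \xi^*).
\end{equation*}
Both limits are taken along sequences $x \to x^*$ in $\overline{D}$, merely restricted to the two sides of the splitting curve $\gamma$ under the projection $P(\cdot, \xi^*)$. Since $\overline{F_1}$ is continuous at $(x^*, \xi^*)$, \emph{every} such restricted limit equals the common value $\overline{F_1}(x^*, \xi^*)$, regardless of the side from which $x$ approaches. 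Hence the two one-sided limits coincide and their difference vanishes, giving $[\overline{F_1}](x^*, \xi^*) = 0$.

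There is essentially no obstacle here: the entire content has been front-loaded into Proposition \ref{prop:EF1}, and the lemma is a one-line corollary of continuity. The only minor point worth recording is that both approach sets are nonempty, so that the two limits are genuinely defined; this was already verified in the proof of Lemma \ref{lem:DD0}, where it was shown that for $x_* = x^* - \tau_-(x^*, \xi^*)\xi^* \in \gamma$ any ball $B_\delta(x_*)$ meets both $A$ and $B$, producing points of $\overline{\Omega_{A,\xi^*}}$ and $\overline{\Omega_{B,\xi^*}}$ arbitrarily close to $x^*$. Combined with Lemma \ref{lem:DD0} and the decomposition $\overline{f} = \overline{F_0} + \overline{F_1}$, this yields $[\overline{f}](x^*, \xi^*) = [\overline{F_0}](x^*, \xi^*)$, which is the assertion of Theorem \ref{thm:MR3}.
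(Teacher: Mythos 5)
Your proposal is correct and follows exactly the paper's argument: the paper proves Lemma \ref{lem:DD1} in one line as an immediate consequence of the continuity of $\overline{F_1}$ on $\overline{D}$ established in Proposition \ref{prop:EF1}. Your extra remarks on the nonemptiness of the two approach sets (borrowed from the proof of Lemma \ref{lem:DD0}) are a harmless refinement of the same reasoning.
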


\begin{proof}
It immediately follows from Proposition \ref{prop:EF1}.
\end{proof}

From Lemma \ref{lem:DD0} and Lemma \ref{lem:DD1}, we have
\begin{equation*}
[\overline{f}](x^*, \xi^*) = [\overline{F_0}](x^*, \xi^*) + [\overline{F_1}](x^*, \xi^*) = I \exp \Bigl( -M_t(x^*, \xi^*; \tau_-(x^*, \xi^*) )\Bigr)
\end{equation*}
for $(x^*, \xi^*) \in \disc(\overline{f})$, which is the statement of Theorem \ref{thm:MR3}.

\section*{Acknowledgement}
The first author was supported in part by JSPS KAKENHI grant number 15K17572.

\end{document}